\documentclass[reqno]{amsart}

\usepackage{comment}
\usepackage{color,cite}
\usepackage{ulem}
\usepackage{hyperref}                   
\hypersetup{colorlinks,
    linkcolor=blue,%
    citecolor=blue}

\usepackage{amssymb, longtable,mathrsfs, comment}
\usepackage{amsmath,amsfonts,amssymb,amsthm,amscd,latexsym}

\usepackage{color}
\usepackage{cite}
\usepackage{graphicx}

\usepackage{hyperref}
\usepackage{enumitem}
\usepackage{ulem}
\hypersetup{colorlinks,
	linkcolor=blue,%
	citecolor=blue}	

\newtheorem{theorem}{Theorem}[section]
\newtheorem{lemma}[theorem]{Lemma}

\newtheorem{prop}[theorem]{Proposition}
\newtheorem{corollary}[theorem]{Corollary}

\newtheorem{remark}[theorem]{Remark}

\newtheorem{prob}[theorem]{Problem}

\numberwithin{equation}{section}
\newcommand{\norm}[1]{\left\lVert#1\right\rVert}

\newcommand{\cM}{{\mathcal M}}
\newcommand{\cN}{{\mathcal N}}
\newcommand{\cA}{{\mathcal A}}

\newcommand{\cP}{{\mathcal P}}

\newcommand{\cL}{{\mathcal L}}
\newcommand{\cF}{{\mathcal F}}

\newcommand{\cU}{{\mathcal U}}

\newcommand{\abs}[1]{\left|#1\right|}

\begin{document}

\author[J. Huang]{Jinghao Huang}
\address{Institute for Advances Study in Mathematics, Harbin Institute of Technologies,
Harbin, 150001, China}
\email{jinghao.huang@hit.edu.cn}

\author[K. Kudaybergenov]{Karimbergen Kudaybergenov}
\address{Institute for Advances Study in Mathematics, Harbin Institute of Technologies,
Harbin, 150001, China and
Suzhou Research Institute of Harbin Institute of Technology, Suzhou, 215104, China}
\email{kudaybergenovkk@gmail.com}

\author[F. Sukochev]{Fedor Sukochev}
\address{School of Mathematics and Statistics, University of New South Wales,
Kensington, 2052, Australia}
\email{f.sukochev@unsw.edu.au}

\author[B. Yan]{Bing Yan$^{\ast}$}\thanks{*The corresponding author.}
\address{Institute for Advances Study in Mathematics, Harbin Institute of Technologies,
Harbin, 150001, China}
\email{bingyan202510@163.com}

\title[Isometries of Haagerup--Schultz algebras]{Isometries of  Haagerup--Schultz  algebras}

\begin{abstract}
We establish a version of 
the classical results concerning descriptions of isometries on $C^*$-algebras and noncommutative $L_p$-spaces due to Kadison (1951) and Yeadon (1981)  in the setting of Haagerup--Schultz algebras. 
Precisely, we show that (not necessarily surjective) isometries on such algebras 
are necessarily implemented by partial isometries and trace-preserving Jordan $^*$-monomorphisms.


\end{abstract}

\subjclass[2010]{47L60; 47C15; 46B04; 46L52}
\keywords{von Neumann algebra; isometry; Haagerup--Schultz algebra; Jordan isomorphism}

\maketitle

\bigskip

\section{Introduction}

An \textit{isometry} between operator algebras (such as $C^*$-algebras or von Neumann algebras) is a (not necessarily surjective) linear map $\Phi:\mathcal{A}\to\mathcal{B}$ satisfying
\begin{equation*}
\left\|{\Phi(x)}\right\|_{\mathcal{B}}=\left\|{x}\right\|_{\mathcal{A}}\quad \text{for all } x\in\mathcal{A}
\end{equation*}
A milestone   in this area is due to Kadison~\cite{K51}, who established a noncommutative version of the classical Banach--Stone Theorem: 
\begin{quote}
\textbf{Kadison's Theorem (1951)} \cite{K51}.
For a linear bijection $\Phi:\mathcal{A}\to\mathcal{B}$ between unital $C^*$-algebras,
\begin{align}\label{descriptionofKadison}
\Phi \text{ is an isometry } \iff \Phi(x)=u\,J(x),\; x\in\mathcal{A},
\end{align}
where $u\in\mathcal{B}$ is unitary ($u^*u=uu^*=\mathbf{1}$) and $J:\mathcal{A}\to\mathcal{B}$ is a Jordan $^*$-isomorphism (preserving the Jordan product $x\circ y=\frac{1}{2}(xy+yx)$ and the involution).
\end{quote}
Russo and Dye\cite{RD66} established Kadison's theorem in a more general framework by showing that 
a linear map between unital $C^*$-algebras that sends unitaries to unitaries is a composition of multiplication by a unitary and a Jordan $^*$-homomorphism.

In the $1930$s,  Banach  obtained the general form of isometries between $L_p$-spaces on a finite measure space\cite{Banach}, which was later extended by Lamperti to a more general setting\cite{Lamperti}. 
After the non-commutative $L_p$-spaces were introduced by Dixmier \cite{Dixmier} and Segal \cite{Segal} in the 1950s, the study of $L_p$-isometries was conducted by Broise \cite{Broise}, Russo\cite{Russo68}, Arazy \cite{Arazy}, Tam \cite{Tam}, etc. A complete description (for the semifinite case) was obtained in 1981 by Yeadon \cite{Yeadon80} (see also  \cite{Sherman05} for the case when  $0<p\le1$). 
A complete description for the limiting case, isometries on noncommutative $L_0$-spaces, is given in a recent paper\cite{BHS24}, 
which extended the Banach--Stone theorem and Kadison's theorem for isometries of von Neumann algebras (see also \cite{HKS} for $L_0$-isometries with respect to another metric). 
For recent progress on the description of isometries on   noncommutative symmetric spaces, we refer the reader to  \cite{HSZ20, HS24} and references therein.

In 1952, Fuglede and Kadison~\cite{FK52} extended the classical notion of the determinant to the setting of  type~II$_1$-factors, 
which has become a fundamental  concept in the theory of operator algebras. Subsequently, Haagerup and Schultz~\cite{HS07} generalized this notion to the setting of 
arbitrary von Neumann algebras equipped with a faithful normal finite trace and  introduced the concept of noncommutative $\mathcal{L}_{\log}$-spaces   of log-integrable operators. Later, Dykema, Sukochev, and Zanin~\cite{DSZ16, DSZ17a} defined an $F$-norm $\left\|{\cdot}\right\|_{\log}$ on the space  \(\cL_{\log}(\cM, \tau)\) and showed
that this space is 
a complete topological $^\ast$-algebra.
 The space \(\cL_{\log}(\cM, \tau)\)  is called the \textit{Haagerup--Schultz algebra} in the special case when $\cM$ is  a group von Neumann algebra\cite{LT14}, where 
 the authors provided an  elegant way to define  \(L^2\)-torsion via  the Haagerup--Schultz algebra\cite[Section~6]{LT14}.
In the present paper,  for a given von Neumann algebra $\cM$ equipped with a semifinite faithful normal trace,  we call $\cL_{\log}(\cM,\tau)$ the Haagerup--Schultz algebra associated with $\cM$.

While the Banach--Stone Theorem, the
Banach--Lamperti Theorem and their noncommutative generalizations provide descriptions of  isometries of 
$C^*$-algebras and commutative/noncommutative 
$L^p$-spaces, the description of  isometries on  Haagerup--Schultz algebras is unexplored. 
The $F$-norm $\left\|{\cdot}\right\|_{\log}$  is non-homogeneous,
which causes several difficulties in the study of isometries on $\cL_{\log}(\cM,\tau)$.
Recently, it is shown in \cite{ACM20} that two spaces
\[
\big(\mathcal{L}_{\log}(\Omega_1,\mu_1),\left\|{\cdot}\right\|_{\log}\big)
\quad\text{and}\quad
\big(\mathcal{L}_{\log}(\Omega_2,\mu_2),\left\|{\cdot}\right\|_{\log}\big)
\]
over two measure spaces respectively
are isometric if and only if there exists a measure-preserving isomorphism between $(\Omega_1,\mu_1)$ and $(\Omega_2,\mu_2)$.

The main goal of the present paper is to give a full description of (not necessarily surjective) isometries on 
the Haagerup--Schultz algebra
$\cL_{\log}(\cM,\tau)$, which complements and generalizes previous results obtained in \cite{K51}, \cite{Yeadon80} and \cite{ACM20}.
Most of existing literature concerning descriptions of isometries on Banach spaces only treat surjective isometries, see   Kadison\cite{K51} (and \cite{FJ08,FJ03,ACM20,BHS24,HS24} and references therein).
Moreover, it is given in \cite[Example 4.7]{BHS24}  that an example of an injective isometry on an $L_\infty$-space/$L_0$-space which does not admit an elementary form similar to  
  \eqref{descriptionofKadison}, i.e., it does not preserve disjointness. 
Until now, commutative/noncommutative $L_p$-spaces\cite{Yeadon80} and  the Hardy space $H^p$\cite{FJ03,Forelli}, $0<p<\infty$,   are   the only known examples of   $F$-spaces admitting  elementary descriptions of isometries  that are not necessarily surjective.
In the present paper, 
we do not require the isometries to be surjective.
This  provides a new example of an $F$-space, on which all  (not necessarily  surjective) isometries are of  elementary forms. 

\begin{theorem}\label{log-isometry}
Let $\mathcal{M}$ and $\mathcal{N}$ be von Neumann algebras equipped with faithful normal semifinite traces $\tau$ and $\nu$, respectively, and let $\Phi:\mathcal{L}_{\log}(\mathcal{M}, \tau) \to \mathcal{L}_{\log}(\mathcal{N}, \nu)$ be a linear mapping (not necessarily surjective). Then the following are equivalent:
\begin{enumerate}
    \item $\Phi$ is an isometry from $\left(\mathcal{L}_{\log}(\mathcal{M}, \tau), \left\|\cdot\right\|_{\log}\right)$ into $\left(\mathcal{L}_{\log}(\mathcal{N}, \nu), \left\|\cdot\right\|_{\log}\right)$;
    \item $\Phi$ admits the decomposition
    \begin{align}\label{gen-form}
        \Phi(x) = w J(x), \quad x \in \mathcal{L}_{\log}(\mathcal{M}, \tau),
    \end{align}
    where  $J:\cM \to \cN$  is a normal Jordan $^*$-monomorphism, the restriction $J|_{\cM\cap \cL_{\log}(\cM, \tau)}$ extends as a Jordan $^*$-monomorphism from $\mathcal{L}_{\log}(\mathcal{M}, \tau)$ into $\mathcal{L}_{\log}(\mathcal{N}, \nu)$ satisfying $\nu(J(x)) = \tau(x)$   for all $0\le x \in \cM$ and
    $w$ is a partial isometry in $\mathcal{N}$ such that $w^*w=J(\mathbf{1})$.
\end{enumerate}
Moreover, if $\Phi$ is surjective, then $w$ is a unitary and $J$ is a Jordan $^*$-isomorphism from $\cM$ onto $\cN$.
\end{theorem}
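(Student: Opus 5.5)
The plan is to reduce the statement to Yeadon's description of $L_1$-isometries, using the non-homogeneity of $\|x\|_{\log}=\tau(\log(\mathbf 1+|x|))=\int_0^\infty \log(1+\mu(t;x))\,dt$ as a tool rather than an obstacle. The key observation is that an isometry $\Phi$ preserves $\|\lambda x\|_{\log}$ for every scalar $\lambda>0$. Different regimes of $\lambda$ then recover different invariants:
\begin{itemize}
\item as $\lambda\to 0$, the $L_1$-norm;
\item as $\lambda\to\infty$, the trace of the support;
\item for intermediate $\lambda$, a Jensen-type rigidity.
\end{itemize}

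\emph{(2)$\Rightarrow$(1).} I would decompose $J$ on $\cM$ as the sum of a $^*$-homomorphism and a $^*$-anti-homomorphism supported on central projections of $J(\cM)''$. From this one gets $|J(x)|=J(|x|)$ up to the anti-part, and in either case $\mu(J(x))=\mu(x)$ by $\nu\circ J=\tau$. Next, $w^*w=J(\mathbf 1)$ gives $|wJ(x)|^2=J(x)^*J(\mathbf 1)J(x)=|J(x)|^2$. Hence $\nu(\log(\mathbf 1+|wJ(x)|))=\tau(\log(\mathbf 1+|x|))$, first for $x\in\cM\cap\cL_{\log}$ and then on all of $\cL_{\log}(\cM,\tau)$ by continuity of the extension.

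\emph{(1)$\Rightarrow$(2).}
\begin{enumerate}
\item For $x\in L_1\cap\cL_{\log}(\cM,\tau)$, monotone convergence gives $\lambda^{-1}\|\lambda x\|_{\log}\to\|x\|_1$ as $\lambda\downarrow0$. Applying this to $\Phi(x)$, with Fatou's lemma to ensure $\Phi(x)\in L_1(\cN)$, shows that $\Phi$ is $L_1$-isometric on the dense subspace $L_1\cap\cL_{\log}$ (for example on $\cM\cap L_1$).
\item Extend $\Phi$ to a linear isometry $L_1(\cM,\tau)\to L_1(\cN,\nu)$ and invoke Yeadon's theorem \cite{Yeadon80}. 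This gives $\Phi(x)=wBJ(x)$ with $J$ a normal Jordan $^*$-monomorphism, $B\ge0$ affiliated with and commuting with $J(\cM)$, $w$ a partial isometry with $w^*w=s(B)$, and $\nu(BJ(x))=\tau(x)$ for $x\ge0$.
\item Fix a projection $e$ with $\tau(e)<\infty$. The isometry condition now reads
\[
\tau(e)\log(1+\lambda)=\nu\bigl(\log(\mathbf 1+\lambda BJ(e))\bigr)\quad\text{for all }\lambda>0,
\]
together with $\nu(BJ(e))=\tau(e)$. Letting $\lambda\to\infty$ yields $\nu(s(BJ(e)))=\tau(e)$. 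Since $t\mapsto\log(1+\lambda t)$ is strictly concave, equality in Jensen's inequality on the support of $BJ(e)$, which has trace $\tau(e)$ and average value $1$, forces $BJ(e)$ to be a projection. Hence $B J(e)=J(e)$. Exhausting $\mathbf 1$ by finite-trace projections gives $B=J(\mathbf 1)$, $w^*w=J(\mathbf 1)$, and $\nu\circ J=\tau$ on $\cM_+$.
\item With $\nu\circ J=\tau$, $J$ preserves generalized singular values. It therefore extends continuously to a Jordan $^*$-monomorphism $\cL_{\log}(\cM)\to\cL_{\log}(\cN)$. Both $\Phi$ and $wJ(\cdot)$ are $\|\cdot\|_{\log}$-continuous and agree on the dense set $\cM\cap L_1$, so they agree everywhere.
\item In the surjective case, $\Phi^{-1}$ is again an isometry. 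Applying the description to $\Phi^{-1}$ and comparing shows that $w$ is unitary and $J$ is onto $\cN$.
\end{enumerate}

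\emph{Main obstacle.} I expect the hardest step to be step 3. The issue is making the Jensen-equality argument rigorous when $B$ is an unbounded affiliated operator, and handling the interaction with the Jordan (not multiplicative) structure of $J$ on the support. I would handle this by passing to the abelian von Neumann algebra generated by $B$ and $J(e)$, where everything becomes a scalar measure-theoretic statement. A secondary technical point is the justification in step 1 that $\Phi$ maps $L_1\cap\cL_{\log}$ into $L_1$; this is where the Fatou-type argument is needed.
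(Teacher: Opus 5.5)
Your proof is correct, and its skeleton matches the paper's: restrict to $L_1$, apply Yeadon's Theorem~2, show the density $B$ (the paper's $b$) equals $J(\mathbf 1)$, then extend by $\|\cdot\|_{\log}$-density of $\cM\cap L_1(\cM,\tau)$. The two analytic steps, however, are done differently.

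The paper first shows that $\|\lambda x\|_{\log}=\|\lambda\Phi(x)\|_{\log}$ for all $\lambda>0$ forces $d(\cdot;|x|)=d(\cdot;|\Phi(x)|)$ for every $x\in L_1(\cM,\tau)$. It does this by writing $\lambda^{-1}\|\lambda x\|_{\log}=\int_0^\infty d(s;|x|)(1+\lambda s)^{-1}\,ds$, a Stieltjes transform, and invoking injectivity of the Stieltjes transform. This gives $\mu(\Phi(x))=\mu(x)$ on all of $L_1$ before Yeadon is used. From it the paper gets the $L_1$-isometry and the fact that $\tau$-finite partial isometries go to partial isometries. It then shows $b$ is a projection by an order argument in two stages: $b_nJ(p_i)b_n\le q_n$ with $p_i\uparrow\mathbf 1$ kills $e_{(1+1/n,\,n)}(b)$, and $b^2=\sup_i bJ(p_i)b$ is a supremum of projections.

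You use only the two endpoint asymptotics. The limit $\lambda\downarrow 0$, via monotone convergence since $\lambda^{-1}\log(1+\lambda t)\uparrow t$, gives the $L_1$-norm. Once Yeadon has reduced everything to the single positive operator $BJ(e)$, the limit $\lambda\to\infty$ plus strict concavity pins down its spectral measure. This replaces the Stieltjes inversion by Fatou, dominated convergence and Jensen, and yields $B\le\mathbf 1$ and $B^2=B$ at once. What you give up is the paper's intermediate result that every $\log$-isometry preserves singular value functions on $L_1$ independently of any structure theorem. You recover that only a posteriori, from $\nu\circ J=\tau$.

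Your $\lambda\to\infty$ step is genuinely needed, since $\nu(J(e))$ is not a priori finite. Fatou gives $\nu(s(BJ(e)))\le\tau(e)$, which excludes a spectral measure of infinite mass, where Jensen would not apply. Equality then follows either by dominating with $\chi_{(0,\infty)}(t)(1+\log_+t/\log 2)$, or directly from Jensen, because $M\mapsto M\log(1+\lambda\tau(e)/M)$ is strictly increasing.

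Two small points:
\begin{enumerate}
\item $L_1(\cM,\tau)\subseteq\cL_{\log}(\cM,\tau)$ always, so no extension is needed in your step~2.
\item In the surjective case, ``comparing'' the two descriptions is left vague. It is cleaner to apply your step~1 to $\Phi^{-1}$, giving $\Phi(L_1(\cM,\tau))=L_1(\cN,\nu)$, and then use the corollary to Yeadon's Theorem~2, as the paper does.
\end{enumerate}
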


This paper is structured as follows.
In Section~\ref{HSa}, we recall necessary notions from  the  noncommutative integration theory, 
including algebras of measurable operators, Haagerup--Schultz algebras and trace-preserving Jordan $^*$-monomorphisms.
Section~\ref{IsoHSa} consists of three subsections. In Subsection~\ref{subsec-1},
we give an expression of the  $F$-norm $\left\|\cdot\right\|_{\log}$ on 
the Haagerup--Schultz algebra $\cL_{\log}(\cM, \tau)$ in terms of  distribution functions  (see Lemma~\ref{key-lm1-1}).
Having this expression at hand,
we show that $\cL_{\log}$-isometry preserves singular value functions of  operators from the noncommutative $L_1$-space, which is the first
key step of the proof of the main result, where
the
so-called Stieltjes transforms plays an important role.
In Subsection~\ref{sin-val-pre}, we show that an isometry of a Haagerup--Schultz algebra sends partial isometries with finite-trace support into partial isometries.
 Subsection~\ref{proo$F$-o$F$-m-th} provides the proof of the   main result of the present paper (Theorem~\ref{log-isometry}). 
In~Section~\ref{S-l_p}, we examine the relationship between isometries of Haagerup--Schultz algebras and those of non-commutative $L_p$-spaces associated with semifinite von Neumann algebras.
In~Section~\ref{S-coincide}, we provide a complete characterization of those semifinite von Neumann algebras $(\mathcal M,\tau)$ for which the Haagerup--Schultz algebra $\mathcal L_{\log}(\mathcal M,\tau)$ coincides with the space $L_1(\mathcal M,\tau)$.
We also characterise the one-parameter group of surjective isometries on $\cL_{\log}(H)$, which  is the first characterization of such a group beyond Banach spaces.

\section{Preliminaries}\label{HSa}

\subsection{$\tau$-Measurable operators}

Let $H$ be a complex Hilbert space and let $B(H)$ denote the $^\ast$-algebra of all bounded linear operators on $H$. Let $\cM$ be a von Neumann algebra contained in $B(H)$. As usual, we denote by $P(\cM)$ the set of all projections in $\cM$.


Recall that a densely defined closed linear operator $x : \mathrm{dom}(x) \to H$ (where $\mathrm{dom}(x)$ is a dense linear subspace of $H$) is said to be \textit{affiliated} with $\cM$ if
\[
yx \subset xy \quad \text{for all } y \in \cM',
\]
where $\cM'$ denotes the commutant of $\cM$ (see \cite{BCLSZ,DPS}).
An operator $x$ affiliated with $\cM$ is called \textit{measurable} (with respect to $\cM$) if
\[
e_{(\lambda,\infty)}(|x|) \text{ is a finite projection for some } \lambda > 0,
\]
where $e_{(\lambda,\infty)}(|x|)$ is the spectral projection of $|x|$ corresponding to the interval $(\lambda, \infty)$.
Let $\tau$ be a faithful normal semifinite trace on $\cM$. A measurable  operator $x$ affiliated with $\cM$ is called \textit{$\tau$-measurable} if
\[
\tau\big(e_{(\lambda,\infty)}(|x|)\big) < \infty \text{ for some } \lambda > 0.
\]
We denote by $S(\cM)$ and $S(\cM, \tau)$ the sets of all measurable and $\tau$-measurable operators, respectively  (see \cite{BCLSZ,DPS}).

For $x, y \in S(\cM)$, it is well known that $x+y$ and $xy$ are densely defined and preclosed operators. Moreover, their closures, as well as $x^\ast$, belong to $S(\cM)$. 
Equipped with these operations, both $S(\cM)$ and $S(\cM, \tau)$ form unital $^\ast$-algebras over $\mathbb{C}$ (see \cite{BCLSZ,DPS}). 
Clearly, $\cM$ itself is a $^\ast$-subalgebra of $S(\cM, \tau)$ and  $S(\cM, \tau)$ is a $^\ast$-subalgebra of $S(\cM)$.

For an arbitrary $x \in S(\cM)$, we define:
\begin{itemize}
    \item \textit{left support} $l(x)$: the smallest projection $p \in \cM$ such that $px = x$;
    \item \textit{right support} $r(x)$: the smallest projection $q \in \cM$ such that $xq = x$;
    \item \textit{support projection} $s(x) = l(x) \vee r(x)$.
\end{itemize}
Note that $s(|x|) = l(|x|) = r(|x|)$ for all $x \in S(\cM)$.

Consider the topology $t_\tau$ of convergence in measure (also called the \textit{measure topology}) on $S(\cM, \tau)$ (see \cite{BCLSZ,DPS}). A neighborhood basis of zero is given by
\[
N(\varepsilon, \delta) = \Big\{x\in S(\cM, \tau) : \exists\, e \in P(\cM), \; \tau(\mathbf{1} - e) \le \delta, \; xe \in \cM, \; \left\|{xe}\right\|_{\cM} \le \varepsilon \Big\},
\]
where $\varepsilon, \delta > 0$. The pair $(S(\cM, \tau), t_\tau)$ is a complete topological $^\ast$-algebra.

\subsection{Generalized singular value functions and distribution functions}

The distribution function of  positive measurable function $x$ on $(0;\infty)$  defined by
\begin{align*}
d(s; x)  = m\left\{t > 0 : x(t) >  s \right\} = \int\limits_0^\infty \chi_{(s;\infty)}(x(t)) \, dt, \quad s \ge 0,
\end{align*}
where $m$ denotes the Lebesgue measure on $\mathbb{R}$.

For $x \in S(\cM, \tau)$, the \textit{spectral distribution function} $d(|x|) = d(\cdot; |x|)$ of $|x|$ is defined by
\[
d(s; |x|) = \tau\big(e_{(s, \infty)}(|x|)\big), \quad s \ge 0
\]
(see \cite[Chapter 4, Section 4.1]{BCLSZ} and \cite[Chapter 3, Section 3.2]{DPS}).

Note that
\begin{align}\label{dslam}
d(s; \lambda |x|) & = d\left(\frac{s}{\lambda}; |x|\right)
\end{align}
for all $s\ge 0$ and $\lambda>0$ (see e.g. \cite[Theorem 1.5.14]{DPS}). 

The \textit{generalized singular value function} $\mu(x)$ of $x \in S(\cM, \tau)$ is given by
\begin{align}\label{mu(x)}
\mu(t; x) &= \inf\{ s > 0 : d(s; |x|) \le t \} \\
&= \inf\big\{ \left\| x (\mathbf{1} - p) \right\|_{\cM} : p \in P(\cM), \; \tau(p) \le t \big\}. \nonumber
\end{align}
Moreover (see \cite[p.130]{DPS}),
\begin{align}\label{dpdmu}
d(s; |x|) = m\big\{ t \ge 0 : \mu(t; x) > s \big\} = d(s; \mu(x)), \quad s \ge 0.
\end{align}

The following lemma summarizes several  properties of generalized  singular value functions (see \cite[Proposition 4.1.9 and Example 4.1.5]{BCLSZ}).

\begin{lemma}\label{mu-prop}
If $x \in S(\cM, \tau)$, then:
\begin{itemize}
    \item[(a)] $\mu(t; \alpha x) = |\alpha| \mu(t; x)$ for all $\alpha \in \mathbb{C}$ and $t > 0$;
    \item[(b)] if $0 \le x \le y$, then $\mu(t; x) \le \mu(t; y)$ for all $t > 0$;
    \item[(c)] $\mu(x) = \chi_{[0, r)}$ if and only if $|x|$ is a projection with $\tau(|x|) = r$;
    \item[(d)] $\mu(t; x) \to \left\|x\right\|_{\cM}$ as $t \to 0$ for bounded $x$, and $\mu(t; x) \to \infty$ as $t \to 0$ for unbounded $x$.
\end{itemize}
\end{lemma}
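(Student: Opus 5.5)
The plan is to derive all four properties directly from the definition \eqref{mu(x)} of $\mu(t;x)$ as the (right-continuous) inverse of the spectral distribution function $d(\cdot;|x|)$, together with the scaling identity \eqref{dslam} and the relation \eqref{dpdmu}. Faithfulness of $\tau$ will be used repeatedly in one form: a spectral projection of trace zero is $0$.

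For (a), the case $\alpha=0$ is trivial. Otherwise $|\alpha x|=|\alpha|\,|x|$, so \eqref{dslam} gives $d(s;|\alpha x|)=d(s/|\alpha|;|x|)$. Substituting $s=|\alpha| s'$ in the infimum defining $\mu(t;\alpha x)$ then yields $|\alpha|\mu(t;x)$.

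For (b), I would pass to square roots. If $0\le x\le y$, then for every projection $p$ and vector $\xi$ in the relevant domain,
\[
\|x^{1/2}(\mathbf 1-p)\xi\|^2\le \|y^{1/2}(\mathbf 1-p)\xi\|^2 .
\]
Hence $\|x^{1/2}(\mathbf 1-p)\|_{\cM}\le\|y^{1/2}(\mathbf 1-p)\|_{\cM}$, where both sides are allowed to equal $+\infty$. The second (min–max) expression in \eqref{mu(x)} then gives $\mu(t;x^{1/2})\le\mu(t;y^{1/2})$. Spectral calculus gives $e_{(s,\infty)}(x)=e_{(\sqrt s,\infty)}(x^{1/2})$, so $d(s;x)=d(\sqrt s;x^{1/2})$ and therefore $\mu(t;x)=\mu(t;x^{1/2})^2$. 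Squaring the previous inequality finishes (b).

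For (c), suppose first that $|x|=e$ is a projection with $\tau(e)=r$. Then $d(s;|x|)=r$ for $0\le s<1$ and $d(s;|x|)=0$ for $s\ge 1$, and inverting gives $\mu(x)=\chi_{[0,r)}$. For the converse, \eqref{dpdmu} gives $d(s;|x|)=m\{\mu>s\}$, which equals $r$ for $s<1$ and $0$ for $s\ge1$. From $\tau(e_{(1,\infty)}(|x|))=0$ and faithfulness we get $|x|\le\mathbf 1$. For $0<s<s'<1$, the projection $e_{(s,s']}(|x|)$ has trace $r-r=0$, provided $r<\infty$, so it vanishes. Taking unions shows $e_{(0,1)}(|x|)=0$, so $\sigma(|x|)\subset\{0,1\}$ and $|x|$ is a projection of trace $r$.

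The delicate point is $r=\infty$. Here the subtraction is illegitimate, and indeed an operator with spectrum in $[1/2,1]$ and infinite spectral projections has $\mu\equiv1$ without being a projection. So I would state and prove (c) for finite $r$, which is the case used later for partial isometries with finite-trace support. I would flag this as the step that needs care.

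For (d), taking $p=0$ shows $\mu(t;x)\le\|x\|_{\cM}$ whenever $x$ is bounded. Conversely, fix any $s<\|x\|_{\cM}$ (any $s>0$ when $x$ is unbounded). Then $e_{(s,\infty)}(|x|)\ne0$, so faithfulness gives $d(s;|x|)>0$, and hence $\mu(t;x)\ge s$ for all $t<d(s;|x|)$. Since $\mu(\cdot;x)$ is non-increasing, this gives $\lim_{t\to0}\mu(t;x)=\|x\|_{\cM}$ in the bounded case and $\lim_{t\to0}\mu(t;x)=\infty$ in the unbounded case.
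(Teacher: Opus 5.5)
Your proof is correct. The paper itself gives no argument: it only cites \cite[Proposition~4.1.9 and Example~4.1.5]{BCLSZ}. So your derivation is a self-contained replacement rather than a variant of an in-paper proof. You obtain (a), (c) and (d) directly from the definition of $\mu(t;x)$ as the right-continuous inverse of $d(\cdot;|x|)$ together with faithfulness of $\tau$, and (b) from the min--max form of \eqref{mu(x)} after passing to square roots.

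What your route buys over the citation is that it exposes a defect in the statement as written. If $r=\infty$ is allowed in (c), the implication ``$\mu(x)=\chi_{[0,r)}\Rightarrow |x|$ is a projection'' is false. For instance, $x=\mathbf{1}-\tfrac12 q$ with $q$ a rank-one projection in $(B(\ell_2),\mathrm{tr})$ has $\mu(x)\equiv 1=\chi_{[0,\infty)}$ but is not a projection. Your subtraction $r-r=0$ is exactly where finiteness is needed. Restricting to $r<\infty$ costs nothing, since the paper only invokes (c) with $r=\tau(uu^*)<\infty$ in Lemma~\ref{imageofproj} and with $r=\tau(e)<\infty$ in Corollary~\ref{cor4.10}.

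The one step you present too casually is in (b). For unbounded $x,y\in S(\cM,\tau)$, the hypothesis $y-x\ge 0$ a priori only gives $\langle x\xi,\xi\rangle\le\langle y\xi,\xi\rangle$ on $\mathrm{dom}(x)\cap\mathrm{dom}(y)$. Upgrading this to $\mathrm{dom}(y^{1/2})\subseteq\mathrm{dom}(x^{1/2})$ with $\|x^{1/2}\eta\|\le\|y^{1/2}\eta\|$ is a standard but nontrivial fact about $\tau$-measurable operators, often packaged as $x^{1/2}=ay^{1/2}$ for a contraction $a\in\cM$, and you should cite it. You also need the observation that $y^{1/2}(\mathbf{1}-p)\in\cM$ forces $(\mathbf{1}-p)H\subseteq\mathrm{dom}(y^{1/2})$. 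With these two facts, your comparison of $\|x^{1/2}(\mathbf{1}-p)\|_{\cM}$ and $\|y^{1/2}(\mathbf{1}-p)\|_{\cM}$ goes through.
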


The \textit{noncommutative $L_1$-space} $L_1(\cM, \tau)$ is defined as
\begin{align*}
L_1(\cM, \tau) = \left\{ x \in S(\cM, \tau) : \int\limits_0^\infty \mu(t; x) \, dt < \infty \right\},
\end{align*}
with the norm
\[
\left\|x\right\|_1 = \int\limits_0^\infty \mu(t; x) \, dt, \quad x \in L_1(\cM, \tau).
\]

By \cite[Propositions 1.1.4 and 1.4.5]{Gra24}, for any positive measurable function $\phi$ on $(0,\infty)$ we have
\begin{align}\label{prop18}
\int\limits_0^\infty \phi(t)\, dt=\int\limits_0^\infty \mu(t;\phi)\, dt=\int\limits_0^\infty d(t;\phi)\, dt.
\end{align}
Hence, for any $x\in L_1(\cM, \tau)$, we have
\begin{align}\label{iden}
\left\|x\right\|_1 &: = \tau(|x|) = \int\limits_0^\infty \mu(t;x)\, dt\stackrel{\eqref{prop18}}{=} \int\limits_0^\infty d(t;\mu(x))\, dt \stackrel{\eqref{dpdmu}}{=} \int\limits_0^\infty d(t;|x|)\, dt.
\end{align}

Let us recall the following useful property.  
\begin{prop}\label{phi-f} \cite[Proposition 1.1.4]{Gra24}
For any increasing continuously differentiable function $\phi$ on $[0,\infty)$ with
$\phi(0) = 0$ and every measurable function $f$ on $[0,\infty)$ with $\phi(|f|)$ integrable on $[0,\infty)$, we
have
\begin{align}\label{phi|f|}
\int\limits_0^\infty \phi(|f(t)|)\,dt & = \int\limits_0^\infty \phi'(s)d(s; |f|)\, ds.
\end{align}
\end{prop}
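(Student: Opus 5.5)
The plan is to write $\phi(|f(t)|)$ as an integral of $\phi'$ via the fundamental theorem of calculus, express that integral with an indicator function, and then interchange the order of integration using Tonelli's theorem; this is the standard ``layer cake'' argument. Fix $t\in[0,\infty)$. Since $\phi$ is continuously differentiable on $[0,\infty)$ with $\phi(0)=0$,
\begin{align*}
\phi(|f(t)|)=\int\limits_0^{|f(t)|}\phi'(s)\,ds=\int\limits_0^\infty \phi'(s)\,\chi_{(s,\infty)}(|f(t)|)\,ds .
\end{align*}
This holds for every $t$ at which $f(t)$ is finite. If $|f|=\infty$ on a set of positive measure, I would interpret $\phi(\infty)$ as $\lim_{r\to\infty}\phi(r)$ and argue by monotone convergence. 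In the situations used in the paper $f$ is a.e. finite, so this case is not needed there.

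Next I would integrate the identity over $t\in[0,\infty)$ and swap the two integrals. Because $\phi$ is increasing, $\phi'\ge 0$, so the integrand $(s,t)\mapsto \phi'(s)\chi_{(s,\infty)}(|f(t)|)$ is nonnegative. Tonelli's theorem therefore applies as soon as this integrand is jointly measurable. Joint measurability is the only point needing care, and I expect it to be the main (mild) obstacle. The set $E=\{(s,t): |f(t)|>s\}$ is the preimage of the open half-plane $\{(a,b):b>a\}$ under the measurable map $(s,t)\mapsto (s,|f(t)|)$, so $E$ is measurable for the product $\sigma$-algebra. Moreover $\phi'$ is continuous, hence Borel. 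So the integrand is a product of product-measurable nonnegative functions.

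Tonelli's theorem then gives
\begin{align*}
\int\limits_0^\infty \phi(|f(t)|)\,dt=\int\limits_0^\infty \phi'(s)\Big(\int\limits_0^\infty \chi_{(s,\infty)}(|f(t)|)\,dt\Big)ds=\int\limits_0^\infty \phi'(s)\,d(s;|f|)\,ds,
\end{align*}
where the inner integral is exactly the distribution function $d(s;|f|)=m\{t>0:|f(t)|>s\}$ from the definition at the start of this subsection. This is \eqref{phi|f|}.

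The integrability hypothesis on $\phi(|f|)$ is not needed for the argument: Tonelli's theorem gives the equality in $[0,\infty]$ in general. The hypothesis only guarantees that both sides are finite, which is how the formula is used later.
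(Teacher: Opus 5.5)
Your proof is correct and is essentially the standard argument: the paper gives no proof of its own and cites \cite[Proposition 1.1.4]{Gra24}, where the identity is obtained in the same way, by writing $\phi(|f(t)|)=\int_0^{|f(t)|}\phi'(s)\,ds$ and interchanging the integrals by Fubini--Tonelli. Your added remarks are also accurate: the set $\{(s,t):|f(t)|>s\}$ is jointly measurable, and because $\phi'\ge 0$ the equality holds in $[0,\infty]$ without the integrability hypothesis.
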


\subsection{Haagerup--Schultz algebras}

Denote by $\cL_{\log}(\cM, \tau)$ the set of all elements $x \in S(\cM, \tau)$ such that \cite{DSZ16}
\[
\tau\bigl( \log(\mathbf{1} + \abs{x}) \bigr) < \infty.
\]
Equipped with the $F$-norm
\begin{align}\label{log-norm}
\left\|x\right\|_{\log} = \tau\bigl( \log(\mathbf{1} + \abs{x}) \bigr) = \int\limits_0^\infty \log\bigl(1+\mu(t;x)\bigr)\,dt, \qquad x \in \cL_{\log}(\cM, \tau),
\end{align}
the space $\cL_{\log}(\cM, \tau)$ forms a complete topological $^\ast$-algebra~\cite{DSZ16}  (see also \cite[Proposition 3.1]{DDSZ20}). The $F$-norm $\left\|\cdot \right\|_{\log}$ satisfies:
\begin{enumerate}
    \item $\left\|{x}\right\|_{\log} > 0$ for $0 \ne x \in \cL _{\log}(\cM,\tau)$;
    \item $\left\|{x^*}\right\|_{\log} = \left\|x\right\|_{\log}$ for $x \in \cL _{\log}(\cM,\tau)$;
    \item $\left\|\lambda x \right\|_{\log} \leq \left\|x\right\|_{\log}$ if $|\lambda| \leq 1$ and  $x \in \cL _{\log}(\cM,\tau)$;
    \item $\lim\limits_{\lambda \to 0} \left\|\lambda x\right\|_{\log} = 0$  for $x \in \cL _{\log}(\cM,\tau)$;
    \item $\left\|x + y\right\|_{\log} \leq \left\|x\right\|_{\log} + \left\|y\right\|_{\log}$  for $x ,y \in \cL _{\log}(\cM,\tau)$.
\end{enumerate}
Furthermore, $L_1(\cM, \tau)$ is dense in $\cL_{\log}(\cM, \tau)$~\cite[Proposition~4.7]{DSZ16}.
If $\cM$ is a finite von Neumann algebra equipped with a faithful normal finite trace $\tau$, then $L_p(\cM,\tau)\subset L_{\log}(\cM,\tau)$ for all $p>0$.

The algebra $\cL_{\log}(\cM, \tau)$ is unital (that is, $\mathbf{1} \in \cL_{\log}(\cM, \tau)$) if and only if $\tau$ is finite. In this case, $\cM \subseteq \cL_{\log}(\cM, \tau)$ and is $\left\|\cdot\right\|_{\log}$-dense in $\cL_{\log}(\cM, \tau)$~\cite[Proposition~4.7]{DSZ16}.


\begin{remark}
Let $\cA$ be an abelian von Neumann algebra with $\cA \cong L_\infty(\Omega, \mu)$ for a measure space $(\Omega, \mu)$, and let $\tau$ be given by integration with respect to  $\mu$:
\[
\tau(x) = \int\limits_\Omega x(\omega)\,d\mu(\omega), \qquad x \in L_\infty(\Omega, \mu).
\]
Let $S(\Omega, \mu)$ denote the set of measurable complex-valued functions on $\Omega$, with the usual identification of functions that agree almost everywhere. Then abelian Haagerup--Schultz algebra and $F$-norm can be defined as follows 
\[
\cL_{\log}(\Omega, \mu) = \left\{ f \in S(\Omega,\mu): \int\limits_\Omega \log(1 + |f(\omega)|)\, d\mu(\omega) < \infty \right\}
\]
and
\[
\left\|f\right\|_{\log} = \int\limits_\Omega \log(1 + |f(\omega)|)\, d\mu(\omega),\, f \in \cL_{\log}(\Omega, \mu).
\]
\end{remark}

\subsection{Trace-preserving Jordan $^*$-monomorphisms induce isometries on Haagerup--Schultz algebras} \label{Jor}

Let $\mathcal{A}$ and $\mathcal{B}$ be associative $^*$-algebras. The \emph{Jordan product} on $\mathcal{A}$ is defined by
\begin{align*}
x \circ y = \frac{1}{2}(xy + yx), \qquad x,y \in \mathcal{A}.
\end{align*}

A linear mapping $\Psi: \mathcal{A} \to \mathcal{B}$ is called:
\begin{itemize}
    \item a \emph{homomorphism} if $\Psi(xy) = \Psi(x)\Psi(y)$ for all $x,y \in \mathcal{A}$;
    \item an \emph{anti-homomorphism} if $\Psi(xy) = \Psi(y)\Psi(x)$ for all $x,y \in \mathcal{A}$;
    \item a \emph{Jordan homomorphism} if
    \begin{align*}
    \Psi(x \circ y) = \Psi(x) \circ \Psi(y) \quad \text{for all } x,y \in \mathcal{A},
    \end{align*}
    or equivalently, $\Psi(x^2) = \Psi(x)^2$ for all $x \in \mathcal{A}$.
\end{itemize}

An 
injective (respectively, bijective)
homomorphism is called a \emph{monomorphism} (respectively, \emph{isomorphism}). Clearly, every (anti-)homomorphism   is a Jordan homomorphism.
If, in addition, $\Psi(x^*) = \Psi(x)^*$ for all $x \in \mathcal{A}$, then $\Psi$ is called a $^*$-homomorphism, $^*$-anti-homomorphism, and Jordan $^*$-homomorphism, respectively.

Let $(\cM,\tau)$ and $(\cN,\nu)$ be two semifinite von Neumann algebras. 
A Jordan $^*$-homomorphism $J$ from $S(\cM, \tau)$ into $S(\cN, \nu)$ is said to be normal  
if for any positive increasing net $\{x_i\}$ in $S(\cM,\tau)$, we have
\begin{align*}
\sup\limits_{i}J(x_i) & =J(\sup\limits_{i}x_i).
\end{align*}

Let  $J$ be  a Jordan $^*$-monomorphism from  $\cM$ into $\cN$ with $\nu(J(x))=\tau(x)$ for all $0\le x\in \cM$.
By \cite[Proposition 3.4]{BHS24}, $J$ extends as a Jordan $^*$-monomorphism from $S(\cM, \tau)$ into $S(\cN, \nu)$.
By \cite[Theorem 3.5]{BHS24}, $J$ preserves the singular value functions,
that is,
\begin{align}\label{pre-sin}
\mu(x) & =\mu(J(x)),\,\, x \in S(\cM, \tau).
\end{align}
For each $x\in \cL_{\log}(\cM, \tau)$, we have
\begin{align*}
\left\|J(x)\right\|_{\log} & \stackrel{\eqref{log-norm}}{=} \int\limits_0^\infty \left(1+\mu(t; J(x))\right)dt\stackrel{\eqref{pre-sin}}{=}\int\limits_0^\infty \left(1+\mu(t; x)\right)dt=\left\|x\right\|_{\log}.
\end{align*}
Thus, $J$ is an isometry from $\cL_{\log}(\cM, \tau)$ into $\cL_{\log}(\cN, \nu).$

Below we need the following property of Jordan $^*$-homomorphisms (see e.g.  \cite[P.212]{BR87}   for the case of bounded operators):
for any $x, y \in S(\cM, \tau)$ we have
\begin{align}\label{xyx}
\Phi(xyx)=\Phi(x)\Phi(y)\Phi(x).
\end{align}

Let $w\in \cN$ be a partial isometry in $\cN$ such that
\begin{align}\label{lJx}
w^*w & =J(\mathbf{1}).
\end{align}
For any $x\in \cL_{\log}(\cM, \tau)$, we have
\begin{align*}
|wJ(x)|^2 & ~=~J(x)^*w^*wJ(x)\stackrel{\eqref{lJx}}{=}J(x)^*J(\mathbf{1})J(x)=J(\mathbf{1}x\mathbf{1})^*J(\mathbf{1})J(\mathbf{1}x\mathbf{1})\\
& \stackrel{\eqref{xyx}}{=} J(\mathbf{1})J(x)^*J(\mathbf{1})^3J(x)J(\mathbf{1})=\left(J(\mathbf{1})J(x)J(\mathbf{1})\right)^*\left(J(\mathbf{1})J(x)J(\mathbf{1})\right)
\\ & \stackrel{\eqref{xyx}}{=} J(x)^*J(x)=|J(x)|^2,
\end{align*}
and hence,
\begin{align*}
\left\|wJ(x)\right\|_{\log} & \stackrel{\eqref{log-norm}}{=} \left\||wJ(x)|\right\|_{\log}=\left\||J(x)|\right\|_{\log}=\left\|J(x)\right\|_{\log}=\left\|x\right\|_{\log}.
\end{align*}
Hence, the mapping
\begin{align*}
x\in \cL_{\log}(\cN, \nu) \mapsto wJ(x)\in \cL_{\log}(\cN,\nu)
\end{align*}
defines an isometry from $\cL_{\log}(\cM, \tau)$ into $\cL_{\log}(\cN, \nu).$
That is, we have proved the following result.

\begin{prop}\label{jor-iso}
The mapping defined as in \eqref{gen-form} is an isometry from
$\cL_{\log}(\cM,\tau)$ into $\cL_{\log}(\cN, \nu)$.
\end{prop}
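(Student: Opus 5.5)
The statement of Proposition~\ref{jor-iso} is essentially the computation carried out just before it, so the plan is to organize that computation into two steps: first show that $J$ itself is an $\left\|\cdot\right\|_{\log}$-isometry, then show that left multiplication by $w$ does not change $|J(x)|$.

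For the first step, we start from the trace-preserving normal Jordan $^*$-monomorphism $J:\cM\to\cN$. By \cite[Proposition 3.4]{BHS24} it extends to a Jordan $^*$-monomorphism $S(\cM,\tau)\to S(\cN,\nu)$. By \cite[Theorem 3.5]{BHS24} this extension preserves singular value functions, which is \eqref{pre-sin}. Substituting into the integral expression \eqref{log-norm}, we get
\[
\left\|J(x)\right\|_{\log}=\int_0^\infty \log\bigl(1+\mu(t;J(x))\bigr)\,dt=\int_0^\infty\log\bigl(1+\mu(t;x)\bigr)\,dt=\left\|x\right\|_{\log}.
\]
This shows at once that $J$ maps $\cL_{\log}(\cM,\tau)$ into $\cL_{\log}(\cN,\nu)$ and is isometric there. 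It also shows that the extension referred to in part (2) of Theorem~\ref{log-isometry} is consistent with the extension to measurable operators.

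For the second step, we show $|wJ(x)|=|J(x)|$. Using $w^*w=J(\mathbf 1)$,
\[
|wJ(x)|^2=J(x)^*J(\mathbf 1)J(x).
\]
The triple-product identity \eqref{xyx}, applied to the extended Jordan map, gives $J(\mathbf 1)J(x)J(\mathbf 1)=J(x)$, and likewise for $J(x)^*=J(x^*)$. Since $J(\mathbf 1)$ is a projection, this yields $J(x)^*J(\mathbf 1)J(x)=J(x)^*J(x)$. Hence $|wJ(x)|=|J(x)|$, so $\mu(wJ(x))=\mu(J(x))$. Combined with the first step, this gives $\left\|wJ(x)\right\|_{\log}=\left\|x\right\|_{\log}$. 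Linearity of $x\mapsto wJ(x)$ is clear.

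The only delicate point is the use of \eqref{xyx} for unbounded $\tau$-measurable operators. The identity is classical for bounded operators \cite{BR87}. For measurable operators I would pass to the limit through bounded truncations, using that the extended $J$ is continuous in the measure topology and that multiplication in $S(\cN,\nu)$ is jointly $t_\nu$-continuous. Alternatively, I would simply polarize $J(x^2)=J(x)^2$ inside the $^*$-algebra $S(\cM,\tau)$. This is purely algebraic once the extension is known to be a Jordan homomorphism, since $xyx=2x\circ(x\circ y)-x^2\circ y$. I expect this justification to be the main, though minor, obstacle. Everything else is direct substitution.
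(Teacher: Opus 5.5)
Your proposal is correct and follows the paper's argument step for step. You extend $J$ via \cite[Proposition 3.4]{BHS24}, use \cite[Theorem 3.5]{BHS24} to get $\mu(J(x))=\mu(x)$ and hence $\|J(x)\|_{\log}=\|x\|_{\log}$, and then combine $w^*w=J(\mathbf{1})$ with \eqref{xyx} to obtain $|wJ(x)|=|J(x)|$. Your derivation of \eqref{xyx} on $S(\cM,\tau)$ from the algebraic identity $xyx=2x\circ(x\circ y)-x^2\circ y$ usefully justifies a point the paper only cites in the bounded case.
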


\section{Isometries of Haagerup-Schultz algebras}\label{IsoHSa}
Throughout this section, we always assume that 
  $\cM$ is a semifinite von Neumann algebra equipped with a semifinite faithful normal trace $\tau$. 

\subsection{Every $\log$-isometry preserves singular value functions of integrable  operators}\label{subsec-1}

Firstly, we obtain the following   description of the log-norm in terms of the distribution function.

\begin{lemma}\label{key-lm1-1} Let  $x \in \cL_{\log}(\cM, \tau)$. Then
\begin{align}\label{key-eq-1}
\left\|\lambda x\right\|_{\log} & =\int\limits_0^\infty \frac{\lambda}{1+\lambda t}d\left(t; |x|\right) \, dt
\end{align}
for all $\lambda > 0$.
\end{lemma}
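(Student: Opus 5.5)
The plan is to reduce the identity to the scalar layer-cake formula of Proposition~\ref{phi-f}, applied to the singular value function $\mu(x)$ with the function $\phi_\lambda(s)=\log(1+\lambda s)$. Fix $\lambda>0$. By Lemma~\ref{mu-prop}(a), $\mu(t;\lambda x)=\lambda\mu(t;x)$. The definition \eqref{log-norm} therefore gives
\begin{align*}
\left\|\lambda x\right\|_{\log}=\int\limits_0^\infty \log\bigl(1+\lambda\mu(t;x)\bigr)\,dt=\int\limits_0^\infty \phi_\lambda\bigl(\mu(t;x)\bigr)\,dt .
\end{align*}
The function $\phi_\lambda$ is increasing and continuously differentiable on $[0,\infty)$, and it satisfies $\phi_\lambda(0)=0$ and $\phi_\lambda'(s)=\frac{\lambda}{1+\lambda s}$.

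To apply Proposition~\ref{phi-f} with $f=\mu(x)$, which is nonnegative so that $|f|=f$, we need $\phi_\lambda(\mu(x))$ to be integrable. This is the only point needing care, since the lemma is stated for all $\lambda>0$ and not just $\lambda\le 1$.
\begin{itemize}
\item For $\lambda\le 1$ we have $\log(1+\lambda s)\le\log(1+s)$.
\item For $\lambda>1$ we use $1+\lambda s\le \lambda(1+s)$. This gives $\log(1+\lambda s)\le \log(1+s)+\log\lambda$, which is not integrable over $(0,\infty)$ in general. So instead I would use the concavity bound $\log(1+\lambda s)\le \lambda\log(1+s)$ for $\lambda\ge1$, $s\ge0$. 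This holds because the difference vanishes at $s=0$ and has derivative $\frac{\lambda}{1+\lambda s}-\frac{\lambda}{1+s}\le 0$.
\end{itemize}
In both cases, $x\in\cL_{\log}(\cM,\tau)$ gives $\int_0^\infty\phi_\lambda(\mu(t;x))\,dt\le\max(1,\lambda)\left\|x\right\|_{\log}<\infty$.

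Proposition~\ref{phi-f} then yields
\begin{align*}
\int\limits_0^\infty \phi_\lambda\bigl(\mu(t;x)\bigr)\,dt=\int\limits_0^\infty \frac{\lambda}{1+\lambda s}\,d\bigl(s;\mu(x)\bigr)\,ds .
\end{align*}
Finally, \eqref{dpdmu} replaces $d(s;\mu(x))$ by $d(s;|x|)$. Renaming the variable $s$ as $t$ gives \eqref{key-eq-1}.

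I expect no real obstacle. The integrability check for $\lambda>1$ is the only nontrivial point, and it is handled by the elementary inequality above. Alternatively, one could note that Proposition~\ref{phi-f} (Tonelli) holds for nonnegative functions regardless of finiteness, which makes the check unnecessary.
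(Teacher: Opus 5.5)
Your proof is correct and takes essentially the paper's route. The paper applies Proposition~\ref{phi-f} with $\phi(s)=\log(1+s)$ to $\lambda x$ and then uses \eqref{dslam} together with the substitution $s=\lambda t$; you instead absorb $\lambda$ into $\phi_\lambda(s)=\log(1+\lambda s)$ via $\mu(\lambda x)=\lambda\mu(x)$, which is the same layer-cake argument, and your integrability check for $\lambda>1$ is valid (though, as you note, not needed by Tonelli).
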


\begin{proof} Let  $x \in \cL_{\log}(\cM, \tau)$.
We claim  that
\begin{align}\label{dphidpsi}
\left\|x\right\|_{\log} & = \int\limits_0^\infty \frac{d(s; |x|)}{1+s} \, ds.
\end{align}
Indeed, 
the functions $\phi(s)=\log(1+s),\, s\ge 0$ and $f(t)=\mu(t; x),\, t\ge 0$ satisfy all conditions of Proposition~\ref{phi-f}. Thus,
\begin{align*}
\left\|x\right\|_{\log} & \stackrel{\eqref{log-norm}}{=}
\int\limits_0^\infty \log\!\bigl(1+\mu(t;x)\bigr)\,dt\stackrel{\eqref{phi|f|}}{=}\int\limits_0^\infty \frac{1}{1+s}d(s;\mu(x))\, ds
\stackrel{\eqref{dpdmu}}{=}\int\limits_0^\infty \frac{d(s;|x|)}{1+s}\, ds.
\end{align*}
This proves the claim. 

Using~\eqref{dphidpsi} and~\eqref{dslam}, we have
\begin{eqnarray*}
\left\|\lambda x\right\|_{\log} & \stackrel{\eqref{dphidpsi}}{=}&\int\limits_0^\infty \frac{1}{1+s}d(s; \lambda |x|) \, ds
\stackrel{\eqref{dslam}}{=} \int\limits_0^\infty \frac{1}{1+s}d\left(\frac{s}{\lambda}; |x|\right) \, ds \\
& \stackrel{\{s=\lambda t\}}{=}&   \int\limits_0^\infty \frac{\lambda}{1+\lambda t}d\left(t; |x|\right) \, dt.
\end{eqnarray*}
for all $\lambda > 0$.
 The proof is complete.
\end{proof}

\begin{lemma}\label{key-lm1}
Let $\phi_1, \phi_2$ be positive measurable functions on $(0, \infty)$ satisfying
\begin{align}\label{key-ineq1}
\int\limits_0^\infty \frac{\phi_1(s)}{1+\lambda s}\,ds & = \int\limits_0^\infty \frac{\phi_2(s)}{1+\lambda s}\,ds
\end{align}
for all $\lambda > 0$. If $\phi_1$ is integrable on $(0,\infty)$, then $\phi_2$ is  also integrable on $(0, \infty)$. 
Moreover, $\phi_1 =\phi_2$ almost everywhere on $(0,\infty)$. 
\end{lemma}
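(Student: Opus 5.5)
First I would establish integrability of $\phi_2$ by letting $\lambda \to 0^+$ in \eqref{key-ineq1}. For fixed $s>0$ the integrand $\frac{\phi_i(s)}{1+\lambda s}$ increases monotonically to $\phi_i(s)$ as $\lambda\downarrow 0$, so the monotone convergence theorem gives
\begin{align*}
\int\limits_0^\infty \phi_2(s)\,ds=\lim_{\lambda\to 0^+}\int\limits_0^\infty \frac{\phi_2(s)}{1+\lambda s}\,ds=\lim_{\lambda\to 0^+}\int\limits_0^\infty \frac{\phi_1(s)}{1+\lambda s}\,ds=\int\limits_0^\infty \phi_1(s)\,ds<\infty .
\end{align*}
Thus $\phi_2\in L_1(0,\infty)$, and so is $\phi_1-\phi_2$.

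For uniqueness, I would substitute $\lambda = 1/z$ and multiply by $1/z$. This gives, for all $z>0$,
\begin{align*}
\int\limits_0^\infty \frac{\phi_1(s)-\phi_2(s)}{z+s}\,ds=0 .
\end{align*}
In other words, the Stieltjes transform $S(z)=\int_0^\infty \frac{\psi(s)}{z+s}\,ds$ of $\psi=\phi_1-\phi_2\in L_1$ vanishes on $(0,\infty)$. Since $\psi$ is integrable, $S$ is holomorphic on $\mathbb{C}\setminus(-\infty,0]$. Holomorphy follows by differentiating under the integral sign, because $|z+s|$ is bounded below uniformly on compact subsets of this domain. The identity theorem then gives $S\equiv 0$ on $\mathbb{C}\setminus(-\infty,0]$.

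Next I would apply Stieltjes inversion. For $x>0$ and $\varepsilon>0$, one has $\frac{1}{2\pi i}\bigl(S(-x-i\varepsilon)-S(-x+i\varepsilon)\bigr)=\frac{1}{\pi}\int_0^\infty \frac{\varepsilon\,\psi(s)}{(s-x)^2+\varepsilon^2}\,ds$, which is the Poisson integral $P_\varepsilon*\psi(x)$. This expression is identically zero, and $P_\varepsilon*\psi\to\psi$ in $L_1$ as $\varepsilon\to0$, so $\psi=0$ almost everywhere.

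As an alternative, one could write $\frac{1}{z+s}=\int_0^\infty e^{-(z+s)u}\,du$ and use Fubini, which is justified by integrability of $\psi$ for $z>0$. Then $S$ becomes the iterated Laplace transform of $\psi$, and $S\equiv0$ means the Laplace transform $L\psi(u)=\int_0^\infty e^{-su}\psi(s)\,ds$ has vanishing Laplace transform. Injectivity of the Laplace transform on $L_1$ then gives $L\psi\equiv0$, and applying it again gives $\psi=0$ almost everywhere. I would likely present this version, since injectivity of the Laplace transform is classical; there $L\psi$ is bounded and continuous, so it is a legitimate input.

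I expect the main obstacle to be the rigorous justification of the uniqueness step: the Fubini/Tonelli interchange and the appeal to injectivity of the Laplace (or Stieltjes) transform. The integrability of $\phi_2$ is routine by monotone convergence.
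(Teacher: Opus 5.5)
Your argument is correct and follows the paper's proof: monotone convergence as $\lambda\downarrow 0$ gives integrability of $\phi_2$, and the substitution $\lambda=1/\zeta$ reduces uniqueness to injectivity of the Stieltjes transform, which the paper cites from Widder whereas you sketch a proof of it. One small correction to your Laplace variant: $L\psi$ is bounded and continuous but need not lie in $L_1(0,\infty)$ (for example, $\psi=\chi_{(0,1)}$ gives $L\psi(u)=(1-e^{-u})/u$), so the first uniqueness step should use Lerch's theorem for bounded (or exponentially bounded) functions rather than injectivity on $L_1$, as your closing remark already suggests.
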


\begin{proof} Take  $\lambda_n = \frac1n$ for $n\ge 1$.
Since $\lambda_n \downarrow_n 0$ and $\phi_i \geq 0$, $i=1,2$, it follows that for each $s>0$, we have 
\begin{align*}
\frac{1}{1+\lambda_n s}\,\phi_i(s) \uparrow_n \phi_i(s) \quad \text{for all }  \, i=1,2.
\end{align*}
By the Monotone Convergence Theorem \cite[Theorem 4.20]{Klenke}, we have 
\begin{align*}
\int\limits_0^\infty \phi_2(s)\,ds & ~=~\lim\limits_{n \to \infty}\int\limits_0^\infty  \frac{1}{1+\lambda_n s}\,\phi_2(s)\,ds  \\
& \stackrel{\eqref{key-ineq1}}{=} \lim\limits_{n \to \infty}\int\limits_0^\infty  \frac{1}{1+\lambda_n s}\,\phi_1(s)\,ds =\int\limits_0^\infty \phi_1(s)\,ds<\infty.
\end{align*}
Thus,  $\phi_2$ is integrable on $(0, \infty)$.

 Recall that the map~\cite[p.125]{Widder71}
\begin{align}\label{Stieltjes}
\phi\in L_1(0;\infty) \mapsto \left[\zeta \mapsto \int\limits_0^\infty \frac{\phi(s)}{\zeta+s} \, ds\right]
\end{align}
is the (absolutely continuous) Stieltjes transform of the integrable function $\phi$ defined on $(0,\infty)$.

Changing $\lambda:=\frac{1}{\zeta}$ in \eqref{key-ineq1}, we have
\begin{align}\label{St=}
\int\limits_0^\infty \frac{\phi_1(s)}{\zeta+s}\, ds  = \int\limits_0^\infty \frac{\phi_2(s)}{\zeta+s}\, ds,
\end{align}
that is, the Stieltjes transforms of these functions coincide~(see~\eqref{Stieltjes}).
Since both $\phi_1$ and $\phi_2$ are integrable on $(0, \infty)$, it follows that 
their Stieltjes transforms in \eqref{St=} are well defined.
By the injectivity of the Stieltjes transform~\cite[Theorem 1.6]{Widder} (see also \cite[Chapter 6, Theorem 4]{Widder71}), we have
$\phi_1 =\phi_2$ almost everywhere on $(0,\infty)$.
The proof is complete.
\end{proof}

The following result demonstrates that a $\log$-isometry preserves the singular value functions of integrable operators, which is a crucial step in proving the main theorem.

\begin{lemma}\label{singular-function}
Let  $x\in L_1(\cM, \tau)$ and $y \in \cL_{\log}(\cN, \nu)$ be such that
\begin{align}\label{main-eq}
\left\|\lambda x \right\|_{\log} & = \left\|\lambda y\right\|_{\log}
\end{align}
for all $\lambda > 0$. Then
$\mu(x)=\mu(y)$. Furthermore, the restriction $\Phi|_{L_1(\cM, \tau)}$ is an isometry from $\left(L_1(\cM, \tau), \left\|\cdot \right\|_1\right)$
into $\left(L_1(\cN, \nu), \left\|\cdot\right\|_1\right).$
\end{lemma}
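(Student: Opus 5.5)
The plan is to turn the hypothesis \eqref{main-eq} into an identity of Stieltjes-type integrals and then apply Lemma~\ref{key-lm1}. By Lemma~\ref{key-lm1-1}, for every $\lambda>0$ we have
\begin{align*}
\int\limits_0^\infty \frac{\lambda}{1+\lambda t}\,d(t;|x|)\,dt=\left\|\lambda x\right\|_{\log}=\left\|\lambda y\right\|_{\log}=\int\limits_0^\infty \frac{\lambda}{1+\lambda t}\,d(t;|y|)\,dt .
\end{align*}
Dividing by $\lambda>0$ gives \eqref{key-ineq1} with $\phi_1=d(\cdot;|x|)$ and $\phi_2=d(\cdot;|y|)$, both positive measurable functions on $(0,\infty)$. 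Since $x\in L_1(\cM,\tau)$, \eqref{iden} shows that $\phi_1$ is integrable, with $\int_0^\infty\phi_1=\left\|x\right\|_1$. Lemma~\ref{key-lm1} then shows that $\phi_2$ is integrable and that $d(t;|x|)=d(t;|y|)$ for almost every $t>0$.

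Next I upgrade the almost-everywhere equality to equality everywhere. Both distribution functions are decreasing and right-continuous in $t$, because $e_{(t,\infty)}$ is strongly right-continuous and the trace is normal. Two right-continuous functions that agree almost everywhere agree at every point, being right limits along a set of full measure. Hence $d(\cdot;|x|)=d(\cdot;|y|)$ on $[0,\infty)$; at $t=0$ use right-continuity, or simply work with $t>0$, which suffices. By the definition \eqref{mu(x)}, $\mu(t;x)=\inf\{s>0: d(s;|x|)\le t\}$ depends only on the distribution function, so $\mu(x)=\mu(y)$. In particular $y\in L_1(\cN,\nu)$ and $\left\|y\right\|_1=\left\|x\right\|_1$, which is also visible from \eqref{iden}.

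For the \emph{furthermore} part, I read $\Phi$ as the isometry of Theorem~\ref{log-isometry}(1). For $x\in L_1(\cM,\tau)$, put $y=\Phi(x)$. By linearity and the isometry property, $\left\|\lambda\Phi(x)\right\|_{\log}=\left\|\Phi(\lambda x)\right\|_{\log}=\left\|\lambda x\right\|_{\log}$ for all $\lambda>0$, so \eqref{main-eq} holds. The first part then gives $\mu(\Phi(x))=\mu(x)$, so $\Phi(x)\in L_1(\cN,\nu)$ and $\left\|\Phi(x)\right\|_1=\left\|x\right\|_1$. Thus $\Phi|_{L_1(\cM,\tau)}$ is a linear isometry of $L_1(\cM,\tau)$ into $L_1(\cN,\nu)$.

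The only delicate point I expect is the passage from equality almost everywhere to equality of $\mu$. It is handled by right-continuity as above, or alternatively by noting that $\mu(x)$ can be computed from the equivalence class of $d(\cdot;|x|)$ via \eqref{dpdmu}. The analytic heavy lifting, injectivity of the Stieltjes transform, is already contained in Lemma~\ref{key-lm1}.
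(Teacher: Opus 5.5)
Your proposal is correct and follows the paper's proof essentially step for step. You apply Lemma~\ref{key-lm1-1}, divide by $\lambda$, invoke Lemma~\ref{key-lm1} to get $d(\cdot;|x|)=d(\cdot;|y|)$ almost everywhere, conclude $\mu(x)=\mu(y)$, and then specialize to $y=\Phi(x)$; your right-continuity argument just makes explicit the passage from almost-everywhere equality to $\mu(x)=\mu(y)$, which the paper leaves implicit.
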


\begin{proof} Since $x\in L_1(\cM, \tau)$, it follows that $d(\cdot;|x|)$ is integrable on $(0;\infty)$ (see \eqref{iden}).
We have
\begin{align*}
\int\limits_0^\infty \frac{d(s;|y|)}{1+\lambda s}\, ds &  \stackrel{\eqref{key-eq-1}}{=} \frac{\left\|\lambda y\right\|_{\log}}{\lambda}\stackrel{\eqref{main-eq}}{=}\frac{\left\|\lambda x\right\|_{\log}}{\lambda}=\int\limits_0^\infty \frac{d(s;|x|)}{1+\lambda s}\, ds
\end{align*}
for all $\lambda>0$. By Lemma~\ref{key-lm1}, $d(t; |x|)=d(t; |y|)$ for almost all $t\in (0,\infty)$, and hence,
$\mu(x)=\mu(y)$.

Finally, we have
\begin{align*}
\left\|x\right\|_1 & =\int\limits_0^\infty \mu(t;x)dt=\int\limits_0^\infty \mu(t;y)dt=\left\|y\right\|_1
\end{align*}
for all $x\in L_1(\cM, \tau)$.  Setting $y=\Phi(x)$, we see that $\Phi|_{L_1(\cM, \tau)}$ is an isometry from $\left(L_1(\cM, \tau), \left\|\cdot \right\|_1\right)$
into $\left(L_1(\cN, \nu), \left\|\cdot\right\|_1\right).$
The proof is complete.
\end{proof}

\begin{remark}
The space $S(\cM, \tau)$ can be equipped with the following symmetric $F$-norm \cite{BHS24} via singular value function (see~\eqref{mu(x)}):
\begin{align*}
\left\|x\right\|_{L_0} & =\inf\limits_{t>0}\left\{t+\mu(t;x)\right\},\, x \in S(\cM, \tau).
\end{align*}
It should be noted that   Lemma~\ref{singular-function} 
fails if we replace  $\left\|{\cdot}\right\|_{\log}$ with $\left\|{\cdot}\right\|_{L_0}$ 
(see Example~4.6~in~\cite{BHS24}).
\end{remark}

\subsection{Some remarks concerning $\cL_{\log}(\cM,\tau)$}

(a) Substituting $\lambda:=\frac{1}{\zeta}$ in \eqref{key-eq-1}, we have
\begin{align*}
\left\|\zeta^{-1} x\right\|_{\log} & =\int\limits_0^\infty \frac{d\left(t; |x|\right)}{\zeta+t} \, dt
\end{align*}
for all $\zeta > 0$. In other words,
\begin{align*}
\zeta \in (0,\infty) \mapsto \left\|\zeta^{-1} x\right\|_{\log}
\end{align*}
coincides with the Stieltjes transform of the distribution function of $|x|$ (see \eqref{Stieltjes}).

(b) 
Let $\mathcal{M}$ be von Neumann algebra with a faithful normal tracial state $\tau$. For an operator of the form $1+x$, where $x\in\mathcal{L}_{\log}(\mathcal{M},\tau)$, the Fuglede--Kadison determinant of $x$ is defined as follows\cite{Br, DDSZ20}:
\begin{equation*}
    \det (1+x)=\exp (\tau(\log\abs{1+x})).
\end{equation*}
The following formula provides a method for computing the Fuglede--Kadison determinant of invertible operators via distribution function of $|x|$.
For any invertible $x \in \cM$ we have  
\begin{align}\label{FKd}
\det(x)  & = \delta \exp\left(\int\limits_{0}^{\infty} \frac{d(t; |x|-\delta\mathbf{1})}{\delta +  s}\, ds\right),
\end{align}
where $\delta$ is a positive number with $|x| \ge \delta\mathbf{1}$.
Indeed, 
setting $\lambda=1$ and $z=|x|\ge 0$ in~\eqref{dphidpsi},
we obtain
\begin{align}\label{fkk-1}
\log\!\big( \det(\mathbf{1} + z) \big)
= \tau\!\big( \log(\mathbf{1} + z) \big)
= \left\|z\right\|_{\log}
\stackrel{\eqref{dphidpsi}}{=}
\int\limits_{0}^{\infty} \frac{d(s; z)}{1 +  s}\, ds.
\end{align}
Since $|x|\ge \delta \mathbf{1}$, we have
\begin{align*}
\log\!\big(\det(\delta^{-1}x) \big) & =\log\!\big(\det(\mathbf{1} + (\delta^{-1}|x|-\mathbf{1})) \big)
\stackrel{\eqref{fkk-1}}{=}
\int\limits_{0}^{\infty} \frac{d(s; \delta^{-1}|x|-\mathbf{1})}{1 +  s}\, ds\\
& \stackrel{\eqref{dslam}}{=}\int\limits_{0}^{\infty} \frac{d(\delta s; |x|-\delta \mathbf{1})}{1 +  s}\, ds=\int\limits_{0}^{\infty} \frac{d(t; |x|-\delta \mathbf{1})}{\delta +  t}\, dt,
\end{align*}
which proves~\eqref{FKd}.

Assume $|x|=\sum\limits_{i=1}^n \lambda_i p_i$, where $\lambda_1\ge  \ldots \ge \lambda_n>0$ and
$p_ip_j=0$ for $i\neq j$, $\sum\limits_{i=1}^np_i=\mathbf{1}$. Since
\begin{align*}
d(t; |x|-\delta) =\begin{cases}
			0, & \text{if $t\ge \lambda_1-\delta$}\\
            \sum\limits_{i=1}^j \tau(p_i), & \text{if $\lambda_{j+1}-\delta\le t<\lambda_j-\delta$\, $(j=1, \cdots, n-1)$}\\
            1, & \text{if $0\le t <\lambda_n-\delta$,}
		 \end{cases}
\end{align*}
where $0<\delta<\lambda_n$, it follows from  \eqref{FKd} that\footnote{
See Part (d) for a detailed proof for a similar result. }
\begin{align*}
\det(x) & =\lambda_1^{\tau(p_1)}\cdots \lambda_n^{\tau(p_n)}.
\end{align*}

(c) 
The following formula provides a method for computing the Fuglede--Kadison determinant of positive elements in  Haagerup--Schultz algebras. 
Let $x \in \mathcal{L}_{\log}(\mathcal{M}, \tau)$ be a positive element. Then, for all $\zeta > 0$, we have
\begin{align}\label{FFK-}
\det\!\left( \mathbf{1} + \zeta^{-1} x \right)
= \exp\!\left( \int\limits_{0}^{\infty} \frac{d(s; |x|)}{\zeta + s} \, ds \right),
\end{align}
Indeed, by~\eqref{key-eq-1} we obtain
\begin{align*}
\log\!\big( \det(\mathbf{1} + \lambda x) \big)
= \tau\!\big( \log(\mathbf{1} + \lambda x) \big)
= \left\|\lambda x\right\|_{\log}
\stackrel{\eqref{key-eq-1}}{=}
\int\limits_{0}^{\infty} \frac{\lambda}{1 + \lambda s} \, d(s; |x|) \, ds,
\end{align*}
substituting variables via $\lambda = \frac{1}{\zeta}$, we obtain  \eqref{FFK-}.

Let $H$ be a separable Hilbert space, and set $\mathcal{M} = B(H)$, the algebra of all bounded operators on $H$. Let $\tau = \mathrm{Tr}$ denote the canonical trace on $B(H)$.

  Consider a positive trace-class operator   $x \in B(H)$, and let $\{\lambda_n\}_{n \ge 1}$ denote its eigenvalues, counted with multiplicities and ordered non-increasingly, that is, $\lambda_n \downarrow 0$  and $\sum\limits_{n\geq 1}\lambda_n<\infty$.
Since
\begin{align*}
d(\cdot; x) = \sum_{n \ge 1} n \, \chi_{[\lambda_{n+1}, \lambda_{n})},
\end{align*}
formula~\eqref{FFK-} yields the following classical expression for the (Fredholm) determinant of $x$ (see \cite[Chapter~3]{Simon05}):
\begin{equation*}
    \det(\mathbf{1}+\zeta x)=\prod\limits_{n\geq1}(1+\zeta \lambda_n).
\end{equation*}
Indeed, for any $n\ge2$, we have
\begin{eqnarray*}
\int\limits_{\lambda_n}^{\lambda_1}\frac{d(s;\abs{x})}{\zeta+s}\,ds&=&\sum\limits_{i=1}^{n-1}\int\limits_{\lambda_{i+1}}^{\lambda_{i}}\frac{d(s;\abs{x})}{\zeta+s}\,ds=\sum\limits_{i=1}^{n-1}\int\limits_{\lambda_{i+1}}^{\lambda_{i}}\frac{i}{\zeta+s}\,ds\\
&\stackrel{\tiny \left[s=\zeta t\right]}{=}&\sum\limits_{i=1}^{n-1}\int\limits_{\zeta^{-1}\lambda_{i+1}}^{\zeta^{-1}\lambda_{i}}\frac{i}{1+t}\,dt\\
&=&\sum\limits_{i=1}^{n-1}i\log(1+\zeta^{-1}\lambda_i)-\sum\limits_{i=1}^{n-1}i\log(1+\zeta^{-1}\lambda_{i+1})\\
&=&\sum\limits_{i=1}^{n-1}\log(1+\zeta^{-1}\lambda_i)-(n-1)\log(1+\zeta^{-1}\lambda_n).
\end{eqnarray*}
Since $0\le (n-1)\log(1+\zeta^{-1}\lambda_n)\le (n-1)\zeta^{-1}\lambda_n\to 0$ as $n\to \infty$ (see \cite[p.256]{Bartle1992}), we have 
\begin{align*}
\int\limits_0^\infty\frac{d(s;\abs{x})}{\zeta+s}\,ds & =\lim\limits_{n\to\infty}\int\limits_{\lambda_n}^{\lambda_1}\frac{d(s;\abs{x})}{\zeta+s}\,ds =\sum\limits_{n=1}^\infty\log(1+\zeta^{-1}\lambda_i).
\end{align*}
Thus,  
\begin{align*}
 \det(1+\zeta^{-1}x) & ~\stackrel{\eqref{FFK-}}{=}\exp\left(\int\limits_{0}^{\infty}\frac{d(s;x)}{\zeta+s}\,ds\right)\\
 & \quad =~\exp\left(\sum\limits_{n=1}^{\infty}\log(1+\zeta^{-1}\lambda_n)\right)=\prod\limits_{n\geq 1}(1+\zeta^{-1}\lambda_n).  
\end{align*}
Changing $\zeta$ to $\zeta^{-1}$, we obtain the required relation.

(d) The space $\cL_{\log}(\cM, \tau)$ can be defined as follows
\begin{align*}
\cL_{\log}(\cM, \tau) & = \left\{x\in S(\cM, \tau):     \int\limits_0^\infty \frac{d(s; x)}{1+s}ds<+\infty\right\}.
\end{align*}

\subsection{$\log$-isometry preserves partial isometries with trace-finite supports}
\label{sin-val-pre}

An operator $u \in B(H)$ is a partial isometry if $u$ is isometric on $\ker(u)^\perp$, that is,
$\left\|u(\xi)\right\|_H=\left\|\xi\right\|_H$ for all $\xi \in \ker(u)^\perp.$

Recall the following characterizations of partial isometries (see \cite[Theorem 2.3.3]{Murphy}). For an operator $u \in B(H)$ the following conditions are equivalent:
\begin{itemize}
\item[(i)] $u=uu^*u$;
\item[(ii)] $u^*u$ is a projection;
\item[(iii)] $uu^*$ is a projection;
\item[(iv)] $u$ is a partial isometry;
\item[(v)] $u^*$ is a partial isometry.
\end{itemize}

Below we need the following sets
\begin{itemize}
\item[-] $P_\tau(\cM)$ the set of all projections $p\in P(\cM)$ with $\tau(p)<\infty$;
\item[-] $\cP\cU_\tau(\cM)$ the set of all partial isometries $u\in \cM$ with $\tau(uu^*)<\infty$;
\item[-] $\cF(\cM, \tau)$ the set of all elements $x\in \cM$ with $\tau(s(x))<\infty$.
\end{itemize}

\begin{lemma}\label{imageofproj} Let $(\cM, \tau)$, $(\cN, \nu)$ and $\Phi$  as in Theorem~\ref{log-isometry} and
let  $u\in \cP\cU_\tau(\cM).$ Then
the element
$\Phi(u)$ is  a partial isometry with $\nu(\Phi(u)^*\Phi(u))=\tau(u^*u)$.
\end{lemma}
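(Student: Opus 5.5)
Since $u\in\cP\cU_\tau(\cM)$, the operator $|u|=u^*u$ is a projection with $\tau(u^*u)=\tau(uu^*)<\infty$, so $u\in L_1(\cM,\tau)$. By Lemma~\ref{mu-prop}(c), $\mu(u)=\chi_{[0,r)}$ with $r=\tau(u^*u)$. Because $\Phi$ is linear and isometric, $\|\lambda\Phi(u)\|_{\log}=\|\Phi(\lambda u)\|_{\log}=\|\lambda u\|_{\log}$ for every $\lambda>0$. Lemma~\ref{singular-function}, applied with $x=u$ and $y=\Phi(u)$, then gives $\mu(\Phi(u))=\mu(u)=\chi_{[0,r)}$. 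The rest of the argument converts this singular value information back into an operator statement.

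For the conversion, I would again invoke Lemma~\ref{mu-prop}(c), which is an equivalence: $\mu(y)=\chi_{[0,r)}$ holds if and only if $|y|$ is a projection with $\nu(|y|)=r$. Hence $|\Phi(u)|$ is a projection. Since $|\Phi(u)|^2=\Phi(u)^*\Phi(u)$ and the square of a projection is itself, $\Phi(u)^*\Phi(u)=|\Phi(u)|$ is a projection. By the characterization (ii)$\Leftrightarrow$(iv) of partial isometries, $\Phi(u)$ is a partial isometry. It also satisfies $\nu(\Phi(u)^*\Phi(u))=\nu(|\Phi(u)|)=r=\tau(u^*u)$.

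One point needs care: a priori $\Phi(u)$ is only an element of $S(\cN,\nu)$. Since $|\Phi(u)|$ is a projection, it is bounded, so $\Phi(u)\in\cN$, and the cited characterization applies. If one wants to avoid citing (c) in the converse direction, it can be checked directly. From $\mu(t;\Phi(u))\in\{0,1\}$ we get $d(s;|\Phi(u)|)=0$ for $s\ge1$, so $|\Phi(u)|\le\mathbf 1$. We also get $d(s;|\Phi(u)|)=r=d(0;|\Phi(u)|)$ for $0\le s<1$, so the spectral measure of $|\Phi(u)|$ gives zero trace to $(0,1)$. Since $\nu$ is faithful, the spectrum of $|\Phi(u)|$ therefore lies in $\{0,1\}$.

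I expect no serious obstacle here. The substantive input is the Stieltjes-transform injectivity already packaged in Lemma~\ref{singular-function}. The only step requiring attention is the passage from the equality of distribution functions to the spectral statement, handled via Lemma~\ref{mu-prop}(c) or the direct argument above.
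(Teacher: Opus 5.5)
Your proposal is correct and matches the paper's proof: apply Lemma~\ref{singular-function} with $x=u$, $y=\Phi(u)$ to get $\mu(\Phi(u))=\mu(u)=\chi_{[0,r)}$, then use the converse direction of Lemma~\ref{mu-prop}(c) to conclude that $|\Phi(u)|=\Phi(u)^*\Phi(u)$ is a projection of trace $r=\tau(u^*u)$. Your extra remarks are also correct and fill in details the paper leaves implicit: that $\Phi(u)$ is bounded, and the direct spectral check that $\nu\bigl(e_{(1,\infty)}(|\Phi(u)|)\bigr)=\nu\bigl(e_{(0,1)}(|\Phi(u)|)\bigr)=0$.
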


\begin{proof} For any $u \in \cP\cU_\tau(\cM)$, we have $\left\|\lambda\Phi(u)\right\|_{\log}=\left\|\lambda u\right\|_{\log}$ for all $\lambda>0$. Using Lemma~\ref{singular-function}, we have
\begin{align*}
\mu(\Phi(u)) &  = \mu(u) \stackrel{{\rm Lemma}~\ref{mu-prop}(c)}{=}     \chi_{[0,\tau(uu^*))}
\end{align*}
Therefore, by Lemma~\ref{mu-prop}(c),  $|\Phi(u)|$ is a projection with 
$$
\nu(|\Phi(u)|)= \nu(|\Phi(u)|^2)=\nu(\Phi(u)^*\Phi(u))=\tau(u^*u),
$$
that is,
$\Phi(u)$ is a partial isometry  in $\cN.$
The proof is complete. 
\end{proof}

\subsection{Proof of Theorem~\ref{log-isometry}} \label{proo$F$-o$F$-m-th}

\begin{proof}[Proof of Theorem~\ref{log-isometry}]
The implication (2)$\Rightarrow$(1) is proved in Proposition~\ref{jor-iso}. Therefore, 
it suffices to  prove the implication
(1)$\Rightarrow$(2).

By Lemma~\ref{singular-function}, the restriction $\Phi|_{L_1(\cM, \tau)}$ is an isometry from $(L_1(\cM, \tau), \left\|\cdot\right\|_1)$
into $(L_1(\cN, \nu), \left\|\cdot\right\|_1).$
By \cite[Theorem 2]{Yeadon80}, there exist, uniquely, a partial isometry $w\in \cN$, an unbounded
positive self-adjoint operator $b$  affiliated with $\cN$, and a normal Jordan $^*$-isomorphism $J$ of $\cM$
onto a weakly closed $^*$-subalgebra of $\cN$ such that
\begin{align}\label{wwsb}
w^*w=J(\mathbf{1})=s(b),
\end{align}
\begin{align}\label{commute}
\text{every spectral projection of}\,\, b\,\, \text{commutes with}\, \, J(x)\,\,\text{for all}\,\,0\le x \in \cM,\qquad
\end{align}
\begin{align}\label{xnub}
\tau(x) & =\nu(bJ(x)) \quad \text{for all} \quad 0\le x\in \cM,
\end{align}
and
\begin{align}\label{xwb}
\Phi(x) & =wbJ(x) \quad \text{for all} \quad x\in L_1(\cM,\tau)\cap \cM.
\end{align}

We first show that $0 \le b \le \mathbf{1}$. For $n \ge 2$, set
\begin{align*}
q_n = e_{\left(1+\frac{1}{n},\,n\right)}(b), \qquad b_n = b q_n.
\end{align*}
Then
\[
\left(1+\frac{1}{n}\right) q_n \le b_n \le n q_n.
\]

For any \(p \in P_\tau(\mathcal{M})\), we have
\[
\Phi(p) q_n \stackrel{\eqref{xwb}}{=} w b J(p) q_n \stackrel{\eqref{commute}}{=} w  b q_n J(p)  = w  b_n J(p)   .
\]
Hence,
\begin{align*}
q_n \Phi(p)^* \Phi(p) q_n
&= J(p) b_n  w^* w   b_n J(p) \stackrel{\eqref{wwsb}}{=} J(p) b_n s(b) b_n J(p) \\
&= J(p) b_n^2 J(p) \stackrel{\eqref{commute}}{=} b_nJ(p)^2b_n=b_n J(p) b_n.
\end{align*}
By Lemma~\ref{imageofproj}, \(\Phi(p)^*\Phi(p)\) is a projection. Therefore, 
\[
q_n \Phi(p)^*\Phi(p) q_n \le q_n \mathbf{1}q_n=q_n,
\]
and thus
\[
b_n J(p) b_n \le q_n.
\]

Let \(\{p_i\}_{i \in I}\) be an increasing net of \(\tau\)-finite projections in \(\mathcal{M}\) with \(p_i \uparrow \mathbf{1}\). Since the map
\[
y \mapsto b_n y b_n
\]
is monotone continuous on \(\mathcal{N}_{sa}\) \cite[Proposition 3.3.7(ii)]{BCLSZ}, and \(J\) is normal,
\[
q_n \ge b_n J(p_i) b_n \uparrow_i b_n J(\mathbf{1}) b_n \stackrel{\eqref{wwsb}}{=}b_n s(b) b_n = b_n^2 \geq\left(1+\frac{1}{n}\right)^2 q_n.
\]
Thus,  \(e_{\left(1+\frac{1}{n},\,n\right)}(b)=q_n = 0\). Since \(n\) was arbitrary, we conclude \(0 \le b \le \mathbf{1}\).

We now prove that \(b\) is a projection. For each \(i\), we have 
\begin{align*}
\Phi(p_i)^*\Phi(p_i)
&\stackrel{\eqref{xwb}}{=} J(p_i) b w^* w b J(p_i)
\stackrel{\eqref{wwsb}}{=} J(p_i) b^2 J(p_i) 
\stackrel{\eqref{commute}}{=} b J(p_i)^2 b = b J(p_i) b.
\end{align*}
By Lemma~\ref{imageofproj}, each \(b J(p_i) b=\Phi(p_i)^*\Phi(p_i)\) is a projection. Since \(J\) is normal, it follows that 
\[
b J(p_i) b \uparrow b J(\mathbf{1}) b \stackrel{\eqref{commute}}{=} b s(b) b = b^2.
\]
Thus,
\(b^2\) is a projection as the supremum of projections. Since \(b \ge 0\), it follows that
$
b = b^2.
$
Hence \(b\) is a projection.

By \eqref{wwsb},
\begin{align}\label{wwsbb}
w^*w=J(\mathbf{1})=s(b)=b.
\end{align}
Thus
\begin{align*}
\Phi(x)=wbJ(x)=ww^*wJ(x)=wJ(x)
\end{align*}
for all $x\in L_1(\cM, \tau)\cap \cM$.
 By \cite[Proposition 3.4.16]{DPS}, $\cF(\cM, \tau)\subset L_1(\cM, \tau)\cap \cM$ is $\left\|\cdot\right\|_1$-dense in $L_1(\cM, \tau).$ 
  Hence, by \cite[Proposition 4.7]{DSZ16}, it is $\left\|\cdot\right\|_{\log}$-dense in $\mathcal{L}_{\log}(\cM, \tau).$
On the other hand, using \eqref{xnub}, we have
\begin{align*}
\tau(p) & \stackrel{~\eqref{xnub}}{=} \nu(bJ(p)) \stackrel{~\eqref{wwsbb}}{=} \nu(J(\mathbf{1})J(p))=\nu(J(\mathbf{1})J(p)J(\mathbf{1}))\stackrel{~\eqref{xyx}}{=}\nu(J(p))
\end{align*}
for all $p\in P_\tau(\cM)$.  
 By \cite[Proposition 3.4 and Theorem 3.5]{BHS24}, $J$ extends to a trace-preserving 
 Jordan $^*$-monomorphism from $S(\cM, \tau)$ into $S(\cN, \nu)$, which we still denote by $J$.
 Thus,
$\Phi(x)=w J(x)$ for all $x\in\mathcal{L}_{\log}(\cM, \tau)$. 
Hence,   $\Phi$ is in the form of \eqref{gen-form}.

Suppose that  $\Phi$ is surjective.  Since $\Phi^{-1}$ is also an isometry, we may apply Lemma~\ref{singular-function} to both $\Phi$ and $\Phi^{-1}$. It follows that the restriction $\Phi|_{L_1(\cM, \tau)}$ is an isometry from $\left(L_1(\cM, \tau), \left\|\cdot\right\|_1\right)$
onto $\left(L_1(\cN, \nu), \left\|\cdot\right\|_1\right).$
By \cite[Corollary of Theorem 2]{Yeadon80},  $w$ is a unitary and $J$ is a bijection from $\cM$ to $\cN$, and hence, it is a Jordan $^*$-isomorphism (which extends to a Jordan $^*$-isomorphism from $S(\cM,\tau)$ to $S(\cN,\nu)$, see \cite[Proposition 3.4(2)]{BHS24}).
The proof is complete.
\end{proof}

A bijection $\phi: P(\cM) \to P(\cN)$ is called an orthoisomorphism, if
\begin{align*}
\phi(p)\phi(q)=0 \quad \Leftrightarrow \quad pq=0.
\end{align*}

We establish the following criterion for the existence of an isometry between two Haagerup–Schultz algebras.

\begin{corollary}
Let $\mathcal{M}$ and $\mathcal{N}$ be von Neumann algebras  equipped with  faithful normal semifinite traces $\tau$ and $\nu$, respectively. Consider the following assertions:
\begin{enumerate}
\item The $F$-spaces $\left(\mathcal{L}_{\log}(\mathcal{M}, \tau), \left\|\cdot \right\|_{\log}\right)$ and $\left(\mathcal{L}_{\log}(\mathcal{N}, \nu), \left\|\cdot \right\|_{\log}\right)$ are isometric;
\item there exists a trace-preserving Jordan $^*$-isomorphism between $\cM$ and $\cN$;
\item there exists a  linear bijection from
$S(\mathcal{M}, \tau)$ onto  $S(\mathcal{N}, \nu)$ which preserves singular value functions;
\item there exists a trace-preserving orthoisomorphism between $P(\cM)$ and $P(\cN)$.
\end{enumerate}
Then the first three conditions are equivalent. Moreover, if $\mathcal{M}$ and $\mathcal{N}$ have no direct summands of type I$_2$, then all four conditions are equivalent. 

\end{corollary}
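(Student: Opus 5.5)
The plan is to prove the cycle (1)$\Rightarrow$(2)$\Rightarrow$(3)$\Rightarrow$(1), and then to connect (2) and (4) under the assumption that there are no type I$_2$ summands.

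For (1)$\Rightarrow$(2), take a surjective isometry $\Phi$ and apply Theorem~\ref{log-isometry} in its surjective form. This gives $\Phi=wJ$, where $w$ is a unitary and $J$ is a Jordan $^*$-isomorphism of $\cM$ onto $\cN$ with $\nu(J(x))=\tau(x)$ for all $0\le x\in\cM$. That is exactly (2).

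For (2)$\Rightarrow$(3), extend the trace-preserving Jordan $^*$-isomorphism $J$ to $S(\cM,\tau)\to S(\cN,\nu)$ using \cite[Proposition 3.4(2)]{BHS24}. The extension is bijective, since $J^{-1}$ is also trace preserving and extends likewise. It preserves singular value functions by \eqref{pre-sin}.

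For (3)$\Rightarrow$(1), let $T$ be a linear bijection with $\mu(T(x))=\mu(x)$ for every $x$. By \eqref{log-norm} the norm $\left\|x\right\|_{\log}$ depends only on $\mu(x)$. Hence $T$ maps $\cL_{\log}(\cM,\tau)$ onto $\cL_{\log}(\cN,\nu)$: the inclusion $T(\cL_{\log}(\cM,\tau))\subseteq\cL_{\log}(\cN,\nu)$ follows from $\mu(Tx)=\mu(x)$, and applying the same argument to $T^{-1}$ gives the reverse inclusion. The restriction of $T$ is therefore an isometry, and it is surjective.

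For the relation with (4): given (2), the restriction of $J$ to $P(\cM)$ is a trace-preserving orthoisomorphism. Jordan $^*$-isomorphisms send projections to projections and preserve orthogonality in both directions, because $pq=0$ if and only if $p\circ q=0$ for projections $p,q$. Bijectivity onto $P(\cN)$ comes from $J^{-1}$.

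The converse (4)$\Rightarrow$(2) is the main obstacle. I would invoke Dye's theorem: when $\cM$ has no type I$_2$ direct summand, every orthoisomorphism $\phi\colon P(\cM)\to P(\cN)$ extends to a Jordan $^*$-isomorphism $J\colon\cM\to\cN$. Trace preservation then has to pass from projections to positive elements. Since $\nu(J(p))=\tau(p)$ for all projections, linearity gives equality of traces on positive finite step functions $\sum\lambda_i p_i$ with orthogonal $p_i$. For a general $0\le x\in\cM$, approximate $x$ from below by an increasing sequence of such step functions built from its spectral projections. Normality of $J$, which holds automatically for a Jordan $^*$-isomorphism because it is an order isomorphism, together with normality of both traces, then yields $\nu(J(x))=\tau(x)$. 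One subtlety is that Dye's theorem needs the absence of type I$_2$ summands in $\cN$ as well. The hypothesis grants this, and the orthoisomorphism also transports type decompositions.
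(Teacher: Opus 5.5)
Your proposal is correct and follows essentially the same route as the paper: (1)$\Rightarrow$(2) from the surjective case of Theorem~\ref{log-isometry}, (2)$\Rightarrow$(3) via the extension and singular-value preservation from \cite{BHS24}, (3)$\Rightarrow$(1) because $\left\|\cdot\right\|_{\log}$ depends only on $\mu(\cdot)$, and (2)$\Leftrightarrow$(4) via Dye's theorem. Your extra step, which passes trace preservation from projections to positive elements using spectral step functions and normality, supplies a detail that the paper leaves implicit in its citation of \cite{Dye55}.
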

\begin{proof}
    
The equivalence $(1)\Leftrightarrow(2)$ follows from Theorem~~\ref{log-isometry}.
The implication $(2)\Rightarrow(3)$ follows from~\cite[Theorem 3.5]{BHS24} and
the implication $(3)\Rightarrow(1)$ follows from the definition of log-norm~\eqref{log-norm}.
Finally, the equivalence $(2)\Leftrightarrow(4)$ follows from~\cite[Corollary of Theorem 1]{Dye55}.
\end{proof}

In \cite{DDSZ20}, the authors extended the notion of the determinant function $\Delta$, originally introduced by Fack~\cite{Fack83} for $\tau$-compact operators, to a natural algebra of $\tau$-measurable operators affiliated with a semifinite von Neumann algebra. Their construction agrees with that of Haagerup and Schultz in the finite case, and the resulting determinant is shown to be submultiplicative. They introduced the space $L_{\log_+}(\mathcal{M}, \tau)$, which plays a central role in their analysis.

Let $\mathcal{M}$ be a von Neumann algebra equipped with a faithful normal semifinite trace $\tau$. Define
\[
L_{\log_+}(\mathcal{M}, \tau)
   = \left\{ x \in S(\mathcal{M}, \tau) : \log_+ |x| \in L_1(\mathcal{M},\tau) + \mathcal{M} \right\},
\]
where $\log_+ t = \max\{\log t,\,0\}$ for $t>0$. It is known \cite[Proposition~3.1]{DDSZ20} that
$L_{\log_+}(\mathcal{M}, \tau)$ is a $^*$-subalgebra of $S(\mathcal{M},\tau)$ and satisfies
\[
L_1(\mathcal{M},\tau) + \mathcal{M} \subseteq L_{\log_+}(\mathcal{M}, \tau) \subseteq S(\mathcal{M}, \tau).
\]
Also note that \cite[p. 815]{DDSZ20}
\[
L_{\log_+}(\mathcal{M},\tau)
    = \mathcal{L}_{\log}(\mathcal{M},\tau) + \mathcal{M},
\]
and that, for a tracial von Neumann algebra $\mathcal{M}$, we have \cite[p. 813]{DDSZ20}
\[
L_{\log_+}(\mathcal{M},\tau)
    = \mathcal{L}_{\log}(\mathcal{M},\tau).
\]

It is natural to refer to the algebra $L_{\log_+}(\mathcal{M},\tau)$ as the {\it unital Haagerup–Schultz} algebra associated with $(\cM,\tau)$.

For $x \in L_{\log_+}(\mathcal{M},\tau)$, set
\begin{equation*}
\left\|x\right\|_{\log_+}
   = \int\limits_0^1 \log\!\bigl(1 + \mu(s;x)\bigr)\, ds.
\end{equation*}
It is readily verified that $\norm{\cdot}_{\log_+}$ is an $F$-norm on $L_{\log_+}(\cM,\tau)$, see e.g. \cite[Corollary~5.4]{DS09}. 

Motivated by Theorem~\ref{log-isometry}, we formulate the following problem.

\begin{prob}
Let $\mathcal{M}$ be a von Neumann algebra equipped with a faithful normal semifinite trace $\tau$.
How to  describe the isometries of $L_{\log_+}(\mathcal{M}, \tau)$?
\end{prob}


\section{Connections with $L_p$-isometries} \label{S-l_p}

Let $\mathcal{M}$ be a von Neumann algebra equipped with a faithful  normal semifinite trace $\tau$. Let $\mathcal{E}$ be a linear subspace of $S(\mathcal{M},\tau)$ equipped with a complete (quasi-)norm $\left\|{\cdot}\right\|_{\mathcal{E}}$. We say that $\mathcal{E}$ is a symmetric (quasi-Banach) operator space (or noncommutative symmetric (quasi-Banach) space) if for $x\in\mathcal{E}$ and for every $y\in S(\mathcal{M},\tau)$ with $\mu(y)\leq\mu(x)$, we have $y\in\mathcal{E}$ and $\left\|{y}\right\|_{\mathcal{E}}\leq\left\|{x}\right\|_{\mathcal{E}}$.

Recall the following construction of a symmetric (quasi-)Banach operator space (or noncommutive symmetric quasi-Banach space) $E(\mathcal{M},\tau)$ from \cite{KS,S14} (see  also its detailed exposition in \cite{LSZ}). Let $E(0,\infty)$ be a symmetric (quasi-)Banach function space on $(0,\infty)$. Set 
\begin{equation*}
    E(\mathcal{M},\tau)=\left\{x\in S(\mathcal{M},\tau):\mu(x)\in E(0,\infty)\right\}
\end{equation*}
equipped with the natural 
(quasi-)norm defined by
\begin{equation*}
    \left\|{x}\right\|_{E(\mathcal{M},\tau)}\triangleq \left\|{\mu(x)}\right\|_{E(0,\infty)},\quad x\in E(\mathcal{M},\tau).
\end{equation*}

If $E(0,\infty)=L_p(\mathcal{M},\tau),\,0<p<\infty$, then the   space $E(\mathcal{M},\tau)$ is the so-called noncommutative $L_p$-space given by
\begin{equation*}
    L_p(\mathcal{M},\tau)\triangleq \left\{x\in S(\mathcal{M},\tau):\left\|{x}\right\|_p\triangleq\tau(\abs{x}^p)^{\frac{1}{p}}<\infty\right\}.
\end{equation*}
\begin{corollary}\label{cor4.10}
    Let $\Phi$ be a continuous linear operator from $\mathcal{L}_{\log}(\mathcal{M},\tau)$ to $\mathcal{L}_{\log}(\mathcal{N},\nu)$. Then, the following statements are equivalent:
    \begin{enumerate}[label=(\arabic*)]
        \item$\Phi$ is an isometry (not necessarily surjective) from $(\mathcal{L}_{\log}(\mathcal{M},\tau),\left\|{\cdot}\right\|_{\log})$ into $(\mathcal{L}_{\log}(\mathcal{N},\nu),\left\|{\cdot}\right\|_{\log})$;
        \item\begin{equation*}
            \Phi(x)=wJ(x),\quad\forall x\in\mathcal{L}_{\log}(\mathcal{M},\tau),
            \end{equation*}
        where $J:\mathcal{M}\to\mathcal{N}$ is a normal Jordan $^*$-monomorphism, the restriction $J|_{\mathcal{M}\cap\mathcal{L}_{\log}(\mathcal{M},\tau)}$ extends as a Jordan $^*$-monomorphism from $\mathcal{L}_{\log}(\mathcal{M}, \tau)$ into $\mathcal{L}_{\log}(\mathcal{N}, \nu)$ satisfying $\nu(J(x)) = \tau(x)$   for all $0\le x \in \cM$ and
    $w$ is a partial isometry in $\mathcal{N}$ such that $w^*w=J(\mathbf{1})$;
        \item$\Phi$ preserves singular value functions;
        \item the restriction of $\Phi$ on  $L_p(\mathcal{M},\tau)\cap\mathcal{L}_{\log}(\mathcal{M},\tau)$   for all $0<p\leq\infty$  is an isometry with respect to $\left\|{\cdot}\right\|_p$;
        \item the restriction of $\Phi$ on $ L_p(\mathcal{M},\tau)\cap\mathcal{L}_{\log}(\mathcal{M},\tau)$  for all $0<p<\infty $ is an isometry  with respect to $\left\|{\cdot}\right\|_p$.

    \end{enumerate}
\end{corollary}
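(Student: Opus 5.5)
The plan is to prove the cycle (1)$\Rightarrow$(2)$\Rightarrow$(3)$\Rightarrow$(4)$\Rightarrow$(5)$\Rightarrow$(1).

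The implication (1)$\Rightarrow$(2) is exactly Theorem~\ref{log-isometry}, so nothing new is needed there.

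For (2)$\Rightarrow$(3), take $x\in\cL_{\log}(\cM,\tau)$. The computation preceding Proposition~\ref{jor-iso} (which uses \eqref{xyx} and $w^*w=J(\mathbf{1})$) shows $|wJ(x)|=|J(x)|$. Combining this with \eqref{pre-sin}, i.e. \cite[Theorem 3.5]{BHS24}, gives
\begin{align*}
\mu(\Phi(x))=\mu(wJ(x))=\mu(J(x))=\mu(x).
\end{align*}

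For (3)$\Rightarrow$(4), each quantity $\|x\|_p$ for $0<p<\infty$ is $\bigl(\int_0^\infty\mu(t;x)^p\,dt\bigr)^{1/p}$, and $\|x\|_\infty=\lim_{t\to0}\mu(t;x)$ by Lemma~\ref{mu-prop}(d). Both are therefore functions of $\mu(x)$ alone. So if $\mu(\Phi(x))=\mu(x)$, then $\Phi$ maps $L_p\cap\cL_{\log}$ into $L_p\cap\cL_{\log}$ isometrically for every $0<p\le\infty$. The implication (4)$\Rightarrow$(5) is trivial, since (5) is a subset of the conditions in (4).

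The real content is (5)$\Rightarrow$(1), which I would reduce to Yeadon's theorem, much as in the proof of Theorem~\ref{log-isometry}.
\begin{itemize}
\item Take $p=1$. Then $\Phi$ restricted to $L_1(\cM,\tau)$ (note $L_1\subset\cL_{\log}$, since $\log(1+s)\le s$) is an $L_1$-isometry into $L_1(\cN,\nu)$.
\item By \cite[Theorem 2]{Yeadon80}, $\Phi(x)=wbJ(x)$ on $L_1\cap\cM$, with $w,b,J$ satisfying \eqref{wwsb}--\eqref{xnub}.
\item We must force $b$ to be a projection. In the proof of Theorem~\ref{log-isometry} this came from Lemma~\ref{imageofproj}, i.e. from $\Phi(p)$ being a partial isometry. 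Here I would get that from a second exponent. For $p\in P_\tau(\cM)$ and $x=p$, compute $|\Phi(p)|=bJ(p)$ using \eqref{commute} and $w^*w=s(b)$. Then $\|\Phi(p)\|_q^q=\nu(b^qJ(p))$ for all $q$, while $\|p\|_q^q=\tau(p)=\nu(bJ(p))$.
\item Taking $q=2$ gives $\nu(b^2J(p))=\nu(bJ(p))$, so $\nu((b-b^2)J(p))=0$ for every $p\in P_\tau(\cM)$.
\item Pass to the supremum over $p_i\uparrow\mathbf{1}$ using normality of $J$ and of $\nu$. This yields $\nu(b-b^2)=0$ on $s(b)=J(\mathbf{1})$ in the appropriate sense, more precisely $\nu\bigl((b-b^2)_+\bigr)=\nu\bigl((b-b^2)_-\bigr)$ locally. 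Combining the $q=2$ and $q=1/2$ (or $q=3$) identities should give both inequalities, and from these $b$ is a projection.
\item An alternative for this step is an equality-in-H\"older or Jensen argument applied to the measure $\nu(\cdot\,J(p))$ restricted to the commutative algebra generated by $b$. Equality of $\int b\,d m=\int b^q\,dm$ for $q=2,3$ forces $\int(b-1)^2b\,dm=0$, hence $b\in\{0,1\}$ a.e. I would use $q=1,2,3$ with this identity; I expect this to be the main technical point, and it is where care with unbounded $b$ and the localisation by $J(p)$ is needed.
\end{itemize}

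Once $b$ is a projection, the rest is identical to the proof of Theorem~\ref{log-isometry}. We get $\Phi=wJ$ on $\cF(\cM,\tau)$, with $J$ trace-preserving, so by Proposition~\ref{jor-iso} the map $x\mapsto wJ(x)$ is a $\log$-isometry. Since $\cF(\cM,\tau)$ is $\|\cdot\|_{\log}$-dense and $\Phi$ is assumed continuous, $\Phi=wJ$ on all of $\cL_{\log}(\cM,\tau)$, which is (1) (and also (2)). The continuity hypothesis in the corollary is used exactly at this density step.
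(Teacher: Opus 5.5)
Your proof is correct. Its skeleton matches the paper's: apply Yeadon's theorem at $p=1$, show that $b$ is a projection, deduce that $J$ is trace-preserving, and conclude by density of $\cF(\cM,\tau)$ together with continuity of $\Phi$. Two steps are done differently.

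\emph{Showing that $b$ is a projection.} The paper localises to $b_i=bJ(e_i)$ and lets the exponent $p\to\infty$ in $\nu(b_i^pJ(e))=\tau(e)$. Blow-up rules out any spectrum of $b_i$ above $1$, and a noncommutative dominated convergence argument kills the part in $(0,1)$. You use only the exponents $1,2,3$. Since $|\Phi(p)|^q=b^qJ(p)$ and $\nu(bJ(p))=\tau(p)$, you get
\begin{align*}
\nu\bigl(b(b-\mathbf{1})^2J(p)\bigr)=\tau(p)-2\tau(p)+\tau(p)=0 .
\end{align*}
The three terms are legitimately in $L_1(\cN,\nu)$ because $\Phi(p)\in L_1\cap L_2\cap L_3$. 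Faithfulness then gives $b(b-\mathbf{1})^2J(p)=0$, and $J(p_i)\uparrow J(\mathbf{1})=s(b)$ gives $b(b-\mathbf{1})^2=0$. This is more elementary, since no limits in $p$ are needed. It also yields a sharper statement: isometry for $p\in\{1,2,3\}$ alone already implies (1).

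\emph{The final step.} The paper proves (5)$\Rightarrow$(3): it shows $\mu(\Phi(x))=\mu(x)$ on $\cF(\cM,\tau)$ and passes to the limit using convergence in measure and continuity of $\mu$. You instead observe that $x\mapsto wJ(x)$ is itself a $\log$-isometry by Proposition~\ref{jor-iso}, hence continuous. Since it agrees with the continuous map $\Phi$ on a dense set, it agrees everywhere. This is slightly shorter.

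You should delete the vague first attempt using only $q=2$ (with $(b-b^2)_\pm$ or $q=1/2$) and keep only the $q=1,2,3$ identity. The exponents $q\in\{1,2\}$ alone cannot force $b$ to be a projection. For example, take $\cM=\mathbb{C}$, $\cN=L_\infty(0,\infty)$ and $\Phi(c)=cf$ with $f=\tfrac12\chi_{(0,4/3)}+2\chi_{(4/3,3/2)}$. Then $\|f\|_1=\|f\|_2=1$ but $\|f\|_3^3=3/2$, so the third exponent is genuinely needed.
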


\begin{proof}
   The equivalence (1)$\Longleftrightarrow $(2) follows from Theorem \ref{log-isometry}. The implication (2)$\Longrightarrow$(3) follows from (\ref{pre-sin}).
   The implications (3)$\Longrightarrow$(1), (3)$\Longrightarrow$(4) and (4)$\Longrightarrow$(5) are trivial.
It suffices to     prove the implication (5)$\Longrightarrow$(3).

    Since $\tau$ is semifinite, it follows that 
    the net of all $\tau$-finite  projections $\{e_i\}_{i\in I}$ satisfies that  $e_i\uparrow\mathbf{1}$.

Since $\Phi$ is an isometry   on $ L_1(\mathcal{M},\tau)\cap\mathcal{L}_{\log}(\mathcal{M},\tau)$    with respect to $\left\|{\cdot}\right\|_1$, we may extend $\Phi$ to $L_1(\cM,\tau)$ (still denoted by $\Phi$). 
By \cite[Theorem 2]{Yeadon80}, there exists a partial isometry $w$, a positive self-adjoint operator $b$ and a Jordan $^*$-monomorphism from $\cM$ into $\cN$ such that 
$$\Phi(x) = w b J(x),~ \forall x\in L_1(\cM,\tau)\cap \cM. $$
Moreover, 
for any $\tau$-finite projection $e_i \in \cM$, we have 
$$b_i := b J(e_i )\in L_1(\cN,\nu), ~ w_i := w  J(e_i ), ~w_i^* w_i =J(e_i )=s(b_i ),$$
 $b_i $ commutes with $J(x)$, $x\in e_i \cM e_i$  
and 
$$
\Phi(x) = w_i b_i J(x), ~\forall x\in e_i \cM e_i
$$
(see \cite[p.46--47]{Yeadon80}).
Since $e_i \cM e_i \subset L_p(\cM,\tau)$ for all $0<p<\infty $, it follows that 
the mapping $\Phi: x \mapsto w_i b_i J(x)$ is also an isometry on $e_i \cM e_i$ with respect to $\left\|{\cdot}\right\|_p$. 

    Firstly, we claim that $b_i$ is a projection. Consider the spectral projection $e_{(1,\infty)}(b_i)$ of $b_i$.
    Assume that  
     $J(e_i \mathcal{M} e_i )\cdot e_{(1,\infty)}(b_i)\ne0$.
     There exists a $\tau$-finite projection $e\in e_i\mathcal{M}e_i$ such that $f=J(e)$ and $f\cdot e_{(1,\infty)}(b_i)\ne0$.  
     Hence, 
     $$\nu(b_i^p\cdot f)=  \nu (b_i^p\cdot f^p)=\nu ((b_if)^p)= \left\|{b_i f}\right\|_p^p =\left\|{e}\right\|_p^p =\tau(e)<\infty$$ for any $0<p<\infty$.
Observe that $$\nu (b_i^p\cdot f)\ge \nu(b_i^p e_{(1,\infty)}(b_i)\cdot f)\to \infty$$ 
as   $p\to\infty$, which is  a contradiction with $  \nu (b_i^p\cdot f)= \tau(e ) <\infty$.
Hence, $ J(e_i \cM e_i )e_{(1,\infty)}(b_i)=0$. 
Therefore, 
    \begin{equation*}
        \tau(e)=\nu(b_i^pe_{(0,1)}(b_i)\cdot f)+\nu (b_i^pe_{\{1\}}(b_i)\cdot f)
    \end{equation*}
    Since $ \mu( f\cdot  b_i^pe_{(0,1)}(b_i) \cdot  f ) \le  \mu (f\cdot  b_ie_{(0,1)}(b_i) \cdot f )\in L_1(0,\infty ) $ when $p\ge 1$ and $b_i^pe_{(0,1)}(b_i) \to 0$ as $p\to \infty $ in the measure topology\cite[Corollary 2.8.2]{DPS}, it follows that 
     $$\nu (b_i^pe_{(0,1)}(b_i)\cdot f)
     =\nu(f \cdot b_i^pe_{(0,1)}(b_i)\cdot f) 
    \stackrel{\mbox{\tiny \cite[Theorem 3.4.21]{DPS}}}{\to} 0$$ as  $p\to\infty$. Hence, 
    $\tau(e)=\nu (e_{\{1\}}(b_i)f)$ and $b_i=e_{\{1\}}(b_i)$ is a projection. In particular, $b_i=r(w_i)=w_i^* w_i =J(e_i  )$.  
Moreover, 
for any $x\in e_i \cM e_i$, we have 
$$J(x) \stackrel{\eqref{xyx}}{=}J(e_i) J(x) J(e_i),$$
i.e., 
\begin{align}\label{Jeil}
J(e_i) \ge l(J(x)), r(J(x)).
\end{align}
Hence, we have 
$$
\Phi(x) = w_i b_i J(x) = w_i s(b_i) J(x) = w_iw_i^* w_i J(x) = w_i J(x)
$$
for all $x\in e_i \cM e_i$. Since 
\begin{align*}
\mu(J(x)) & \geq\mu(w_iJ(x))\geq\mu(w_i^*w_iJ(x))=\mu(J(e_i)J(x))\stackrel{\eqref{Jeil}}{=}\mu(J(x)),
\end{align*}
it follows that $\mu(J(x))=\mu(\Phi(x))$.

Note that for any $\tau$-finite projection $e\in \cM$, we have 
$$
\left( \int\limits_0^\infty \mu(t;e)dt \right)^{1/p }= \left\|{e}\right\|_{p} =\left\|{\Phi(e)}\right\|_{p} = \left\|{J(e)}\right\|_p = \left(\int\limits_0^\infty\mu(t;J(e))dt\right)^{1/p}.
$$ 
By Lemma \ref{mu-prop}(c), we have 
$$
\int\limits_0^{\infty}\chi_{[0,\tau(e))}\,dt=\int\limits_0^{\infty}\chi_{[0,\nu(J(e)))}\,dt.
$$
Hence, for any $\tau$-finite projection $e\in\cM$, we have $\tau(e)=\nu(J(e))$. By (\ref{pre-sin}), $J$ preserves singular value functions, that is,
for each $e_i$ and any $x\in e_i \cM e_i$, we have 
$$ \mu(x) =\mu(J(x))=\mu(\Phi(x)).  $$
In other words, for any $x\in \cF(\cM,\tau)$, we have 
$$\mu(x) =\mu(\Phi(x)). $$

Recall that 
   $\cF(\cM, \tau)\subset L_1(\cM, \tau)\cap \cM$ is $\left\|\cdot\right\|_1$-dense in $L_1(\cM, \tau)$\cite[Proposition 3.4.16]{DPS} and  $\left\|\cdot\right \|_{\log}$-dense in $\mathcal{L}_{\log}(\cM, \tau)$\cite[Proposition 4.7]{DSZ16}. 
For any $x\in \cL_{\log }(\cM,\tau)$, there exists a sequence $\{x_n\}_{n\ge 1}\subset \cF(\cM,\tau)$ such that $x_n \to x$ in $\left\|{\cdot}\right\|_{\log}$ as $n\to \infty $. 
Moreover, since $\Phi$ is a continuous mapping on $\cL_{\log}(\cM,\tau)$, it follows that $\Phi(x_n)\to \Phi(x)$ as $n\to \infty $ in $\left\|{\cdot}\right\|_{\log}$. 
Since the topology induced by $\left\|{\cdot}\right\|_{\log}$ is stronger than the measure topology (see e.g. \cite[Remark 4.8]{DSZ16} or \cite[Lemma 2.4]{HLS}), it follows that 
$\{x_n\}_{n\ge 1}$ such that $x_n \to x$ and  $\Phi(x_n)\to \Phi(x)$  as $n\to \infty $ in the measure topology. 
Since $\mu(x_n)=\mu(\Phi(x_n))$ for all $n\ge 1$, it follows from \cite[Proposition 3.2.11]{DPS}
that 
$$\mu(x) =\mu(\Phi(x)). $$
This completes the proof. 
\end{proof}

\begin{corollary}\label{cor:surjective}
    Let $\Phi$ be a continuous linear operator from $\mathcal{L}_{\log}(\mathcal{M},\tau)$ to $\mathcal{L}_{\log}(\mathcal{N},\nu)$.    Then, the following statements are equivalent:
    \begin{enumerate}[label=(\arabic*)]
        \item $\Phi$ is a surjective isometry from $(\mathcal{L}_{\log}(\mathcal{M},\tau),\left\|{\cdot}\right\|_{\log})$ onto $(\mathcal{L}_{\log}(\mathcal{N},\nu),\left\|{\cdot}\right\|_{\log})$;
        \item \begin{equation*}
            \Phi(x)=uJ(x),~ \forall x\in\mathcal{L}_{\log}(\mathcal{M},\tau),
        \end{equation*}
        where $u$ is a unitary in $\cN$ and $J:\cM\to\cN$ is a normal Jordan $^*$-isomorphism, the restriction $J|_{\mathcal{M}\cap\cL_{\log}(\cM,\tau)}$ extends as a Jordan $^*$-isomorphism from $\cL_{\log}(\cM,\tau)$ into $\cL_{\log}(\cN,\nu)$ satisfying $\nu(J(x))=\tau(x)$ for all $0\leq x\in\mathcal{M}$
        \item $\Phi$ is surjective and preserves the singular value functions;
          \item the restriction of $\Phi$ on  $L_p(\mathcal{M},\tau)\cap\mathcal{L}_{\log}(\mathcal{M},\tau)$  
          is a surjective isometry from $L_p(\mathcal{M},\tau)\cap\mathcal{L}_{\log}(\mathcal{M},\tau)$ onto $$L_p(\mathcal{N},\nu)\cap\mathcal{L}_{\log}(\mathcal{N},\nu)$$
          for all $0<p\leq\infty$  with respect to $\left\|{\cdot}\right\|_p$;
        \item the restriction of $\Phi$ on  $L_p(\mathcal{M},\tau)\cap\mathcal{L}_{\log}(\mathcal{M},\tau)$  
          is a surjective isometry from $L_p(\mathcal{M},\tau)\cap\mathcal{L}_{\log}(\mathcal{M},\tau)$ onto $$L_p(\mathcal{N},\nu)\cap\mathcal{L}_{\log}(\mathcal{N},\nu)$$
          for all $0<p< \infty$  with respect to $\left\|{\cdot}\right\|_p$;
        \item there exist $0<p\ne q<\infty$    such that the restriction of $\Phi$ on $L_p(\mathcal{M},\tau)$ is a surjective isometry from $((L_p(\mathcal{M},\tau)\cap\mathcal{L}_{\log}(\mathcal{M},\tau),\left\|{\cdot}\right\|_p)$ onto $(L_p(\mathcal{N},\nu)\cap\mathcal{L}_{\log}(\mathcal{N},\nu),\left\|{\cdot}\right\|_p)$ and the restriction of $\Phi$ on $L_q(\mathcal{M},\tau)$ is also a surjective isometry from $((L_q(\mathcal{M},\tau)\cap\mathcal{L}_{\log}(\mathcal{M},\tau),\left\|{\cdot}\right\|_q)$ onto $(L_q(\mathcal{N},\nu)\cap\mathcal{L}_{\log}(\mathcal{N},\nu),\left\|{\cdot}\right\|_q)$.
    \end{enumerate}
\end{corollary}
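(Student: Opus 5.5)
The plan is to route every statement through (1) and (2), using Theorem~\ref{log-isometry} (surjective part) and Corollary~\ref{cor4.10}. I would prove the chain (1)$\Rightarrow$(2)$\Rightarrow$(3)$\Rightarrow$(1), then (2)$\Rightarrow$(4)$\Rightarrow$(5)$\Rightarrow$(6), and close with (6)$\Rightarrow$(2).

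For the first chain:
\begin{enumerate}
\item[(i)] (1)$\Rightarrow$(2) is exactly the ``moreover'' part of Theorem~\ref{log-isometry}.
\item[(ii)] (2)$\Rightarrow$(3): surjectivity is part of (1), which we already know is equivalent to (2) by Theorem~\ref{log-isometry} and Proposition~\ref{jor-iso}. Preservation of singular values follows from \eqref{pre-sin}, together with $|uJ(x)|=|J(x)|$ for a unitary $u$.
\item[(iii)] (3)$\Rightarrow$(1) is immediate from \eqref{log-norm}.
\end{enumerate}

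For (2)$\Rightarrow$(4), preservation of singular values gives that $\Phi$ maps $L_p(\cM,\tau)\cap\cL_{\log}(\cM,\tau)$ isometrically into $L_p(\cN,\nu)\cap\cL_{\log}(\cN,\nu)$. For surjectivity, take $y\in L_p(\cN,\nu)\cap\cL_{\log}(\cN,\nu)$. Write $y=\Phi(x)$ with $x\in\cL_{\log}(\cM,\tau)$, using surjectivity of $\Phi$. Then $\mu(x)=\mu(y)$, so $x\in L_p(\cM,\tau)$; the case $p=\infty$ is handled the same way via Lemma~\ref{mu-prop}(d). The implications (4)$\Rightarrow$(5)$\Rightarrow$(6) are trivial.

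The main obstacle is (6)$\Rightarrow$(2): two $L_p$-isometries with $p\ne q$ must be upgraded to a $\log$-isometry. I would adapt the proof of (5)$\Rightarrow$(3) in Corollary~\ref{cor4.10}, with Yeadon's theorem for $L_p$ (valid for $p\ne2$; at least one of $p,q$ differs from $2$) in place of the $L_1$ version. One obtains, on each $e_i\cM e_i$, the representation $\Phi(x)=w_ib_iJ(x)$ with $b_i$ commuting with $J(e_i\cM e_i)$. For a $\tau$-finite projection $e\le e_i$ with $f=J(e)$, the two isometry conditions give
\[
\nu(b_i^pf)=\tau(e)=\nu(b_i^qf).
\]
Since $b_i$ commutes with $f$, this says that the positive measure $\nu(e_{\cdot}(b_i)f)$ has equal $p$-th and $q$-th moments, both equal to its total mass $\nu(f)$ (the total mass is finite). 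I have not checked this last point carefully, but I expect the $\nu(b_i^0f)$-type normalization to be recoverable from Yeadon's trace relation \eqref{xnub}. Granting it, the function $t\mapsto t^p$ against this measure equals its mass and likewise for $t^q$. Hence the measure is concentrated where $t^p=t^q=1$, i.e.\ at $t=1$, provided we can compare with the mass via the strict convexity of $t\mapsto t^{q/p}$ (Jensen with equality). So $b_i$ is a projection on $J(e_i)$, $\nu(J(e))=\tau(e)$, and hence $\Phi=w_iJ$ preserves $\mu$ on $\cF(\cM,\tau)$.

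From there I would argue exactly as at the end of the proof of Corollary~\ref{cor4.10}. Density of $\cF(\cM,\tau)$ in $\cL_{\log}(\cM,\tau)$, continuity of $\Phi$, and \cite[Proposition 3.2.11]{DPS} give $\mu(\Phi(x))=\mu(x)$ on all of $\cL_{\log}(\cM,\tau)$, so $\Phi$ is a $\log$-isometry. Surjectivity of $\Phi$ onto $\cL_{\log}(\cN,\nu)$ follows because its range is closed (isometry on a complete space) and contains the dense set $L_p(\cN,\nu)\cap\cL_{\log}(\cN,\nu)$. Then Theorem~\ref{log-isometry} yields (2).
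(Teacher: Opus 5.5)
\textbf{Gap in (6)$\Rightarrow$(2).} The other implications are fine, but this key step breaks exactly at the point you flagged. The two isometry conditions only give $\nu(b_i^pf)=\tau(e)=\nu(b_i^qf)$, i.e.\ the measure $m=\nu(e_{(\cdot)}(b_i)f)$ has equal $p$-th and $q$-th moments.

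The normalization $m((0,\infty))=\nu(J(e))=\tau(e)$ cannot be recovered from Yeadon's trace relation. In the $L_p$ version of Yeadon's theorem that relation reads $\nu(b^pJ(x))=\tau(x)$, which is just the $p$-isometry condition again. Moreover, $\nu(J(e))=\tau(e)$ is essentially the conclusion you are trying to reach.

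Without that normalization, two moment identities do not force $m$ to sit at $1$. For $m=\frac43\delta_{1/2}+\frac16\delta_2$ one has $\int t\,dm=\int t^2\,dm=1$, yet $m$ has mass $\frac32$. Concretely, take $\Phi(c)=cx$ with $x=\frac12\chi_{(0,4/3)}+2\chi_{[4/3,3/2)}$. This is an $L_1$- and $L_2$-isometry from $\mathbb{C}$ into $L_\infty(0,\infty)$, but $\mu(\Phi(1))\ne\mu(1)$.

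Your argument that $b_i$ is a projection never uses the surjectivity in (6); you invoke it only at the very end. So it cannot be correct as written: it would apply verbatim to this non-surjective map.

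\textbf{Missing idea: where surjectivity enters.} First extend $\Phi$ by density to an isometry $L_p(\cM,\tau)\to L_p(\cN,\nu)$. Check that this extension is onto, using the surjectivity in (6) together with density and completeness. Then the surjective case of Yeadon's theorem applies: $J$ maps $\cM$ onto $\cN$, and $b$ is affiliated with the centre of $\cN$.

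Consequently the spectral projections of $b_i$ lie in $J(e_i\cM e_i)$. You can therefore pick a ($\tau$-finite) $r\le e_i$ with $J(r)=e_{(1,\infty)}(b_i)$, which localizes the measure to $(1,\infty)$. Applying both isometry conditions to $r$ gives
\[
\nu\big((b_i^q-b_i^p)e_{(1,\infty)}(b_i)\big)=0 .
\]
The integrand has constant sign, so $e_{(1,\infty)}(b_i)=0$; likewise $e_{(0,1)}(b_i)=0$.

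Hence $b_i=J(e_i)$. Only now does $\nu(J(e))=\tau(e)=\nu(b_i^pJ(e))$ follow, and no Jensen argument or a priori normalization is needed. With this step in place, your concluding density/continuity argument and the closed-range argument for surjectivity onto $\cL_{\log}(\cN,\nu)$ go through.
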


\begin{proof}  
The equivalence (1)$\Longleftrightarrow$(2) follows from Theorem~\ref{log-isometry}. 
The equivalence 
(1)$\Longrightarrow$(3) follows from Lemma~\ref{singular-function}. 
The implication
(3)$\Longrightarrow$(1) follows  from (\ref{log-norm}). 
The implications 
(3)$\Longrightarrow$(4)$\Longrightarrow$(5)$\Longrightarrow$(6) are trivial.
 

It suffices to prove the implication (6)$\Longrightarrow$(3).
Without loss of generality, we may assume that $p\ne 2$. 
Since $L_p(\mathcal{M},\tau)\cap\mathcal{L}_{\log}(\mathcal{M},\tau)$ is dense in $L_p(\cM,\tau)$, it follows that 
$\Phi$ extends to an isometry from $L_p(\cM,\tau)$ into $L_p(\cN,\nu)$ (still denoted by $\Phi$). 
We claim that $\Phi$ is a surjective isometry from $L_p(\cM,\tau)$ onto $L_p(\cN,\nu)$. 
Indeed, 
 for any $y\in L_p(\cN,\nu)$, there exists a sequence $\{y_n\}\in L_p(\mathcal{N},\nu)\cap\mathcal{L}_{\log}(\mathcal{N},\nu)$ such that $\left\|{y_n - y}\right\|_p\to 0$ as $n\to \infty$. 
Therefore, we have 
$$
\left\|{ \Phi^{-1}(y_n) - \Phi^{-1}(y)}\right\|_p\to 0
$$ as $n\to \infty$. 
Let $x$ be the limit of $\Phi^{-1}(y_n)$ in $L_p(\cM,\tau)$. 
We have 
$$
\Phi(x) = \lim_{n\to \infty} \Phi(\Phi^{-1}(y_n))  = \lim_{n\to \infty}  y_n =y.
$$
This shows that $\Phi$ is surjective. 

By \cite[Theorem 2]{Yeadon80}, there exists a  unitary operator  $w$, a positive self-adjoint operator $b$ and a Jordan $^*$-isomorphism from $\cM$ onto $\cN$ such that 
$$
\Phi(x) = w b J(x),~ \forall x\in L_p(\cM,\tau)\cap \cM.
$$
Moreover, 
for any $\tau$-finite projection $e_i \in \cM$, we have 
$$
b_i := b J(e_i ), ~ w_i := w  J(e_i ), ~w_i^* w_i =J(e_i )=s(b_i ),
$$
$b_i $ commutes with $J(x)$, $x\in e_i \cM e_i$,  
and 
$$
\Phi(x) = w_i b_i J(x), ~\forall x\in L_p(\cM,\tau)\cap e_i \cM e_i.
$$

We  claim  $b_i$ is a projection. 
Assume by contradiction that   $e_{(1,\infty)}(b_i)$ (or  $e_{(0,1)}(b_i) $)  is not $0$.
Since $b$ is affiliated with the center of $\cN$ \cite[Corollary]{Yeadon80}, it follows that $b_i =b J(e_i)$ is a $\tau$-measurable operator (in $L_p(\cN,\nu)$) affiliated with the center  of $J(e _i \cM e_i )$. 
Since $J$ is a Jordan $^*$-isomorphism from $e_i \cM e_i $ onto $J(e_i \cM e_i )$, it follows that  there exists a  projection $r\in e_i\mathcal{M}e_i$ such that $J(r)=e_{(1,\infty)}(b_i)\in J(e_i \mathcal{M}e_i)$.  

Since the restriction of $\Phi$ on $L_p(\mathcal{M},\tau)$ is an isometry, it follows that 
$\left\|{r}\right\|_p=\left\|{b_iJ(r)}\right\|_p$. 
On the other hand, 
$\Phi$ is an isometry on  $L_q(\mathcal{M},\tau)\cap \cM $ with respect to  $\left\|{\cdot}\right\|_q$. 
Therefore, we 
have $\left\|{r}\right\|_q=\left\|{b_iJ(r)}\right\|_q$. 
That is,
\begin{equation*}
    \tau(r)=\left\|{b_iJ(r)}\right\|_p^p=\left\|{b_iJ(r)}\right\|_q^q.
\end{equation*}
Then, $\nu(\abs{b_iJ(r)}^p)=\nu(\abs{b_iJ(r)}^q)$. 
 Therefore,   
\begin{align*}
    0 & =\nu\left(\abs{b_iJ(r)}^{q}\right)-\nu\left(\abs{b_iJ(r)}^{p}\right)=\nu\left((\abs{b_iJ(r)}^{q-p}-1)\abs{b_iJ(r)}^{p}\right)\\
    &=\nu\left(\left(\abs{b_i\cdot e_{(1,\infty)}(b_i)}^{q-p }-1\right)\abs{b_i\cdot e_{(1,\infty)}(b_i)}^{p}\right) \ne 0,
\end{align*}
which is a contradiction. 
Arguing similarly for $e_{(0,1)} (b_i)$, we obtain that $e_{(0,1)} (b_i)=0$.
This shows that 
$b_i$ is a partial isometry. 
Arguing similarly as the proof of Corollary \ref{cor4.10}, we obtain that  $\Phi(x)=wbJ(x)$ preserves the singular value functions of elements from $\mathcal{L}_{\log}(\mathcal{M},\tau)$, which completes the proof. 
\end{proof}

\begin{remark}
   Note that when $\Phi$ is injective, 
   the Condition (5) in Corollary \ref{cor4.10} can not be relaxed to ``the restriction of $\Phi$ on $ L_p(\mathcal{M},\tau)\cap\mathcal{L}_{\log}(\mathcal{M},\tau)$  is an isometry  with respect to $\left\|{\cdot}\right\|_p$  for all two different values of $p \in (0,\infty) $'' as Condition (6) in Corollary \ref{cor:surjective}. 
Indeed, 
    let $\mathcal{M}=\mathbb{C}$ and  $\mathcal{N}=L_\infty (0,\infty )$.  
    Define $\Phi(c)=cx$ for any $c\in\mathbb{C}$, where $$x=\frac{ \chi_{(0,1)}+2\cdot \chi_{(1,3)}}{3} . $$
    Note that $\left\|{x}\right\|_1=\left\|{x}\right\|_2=1$. We obtain that 
      $\Phi$ is both an $L_1$-isometry and an $L_2$-isometry, but $\mu(c)\ne\mu(\Phi(c))$.
\end{remark}

\begin{corollary}
Let $\Phi$ be a continuous linear operator from $\mathcal{L}_{\log}(\mathcal{M},\tau)$ to $\mathcal{L}_{\log}(\mathcal{N},\nu)$. Then, any of the equivalent statements in Corollary \ref{cor:surjective} is equivalent to the following:
\begin{enumerate}
\item[(7)] for all symmetric quasi-Banach function spaces $E(0,\infty)$, the restriction of $\Phi$ on $E(\mathcal{M},\tau)$ is necessarily a surjective isometry from $(E(\mathcal{M},\tau)\cap \mathcal{L}_{\log}(\mathcal{M},\tau),\left\|{\cdot}\right\|_E)$ onto $(E(\cN,\nu)\cap \mathcal{L}_{\log}(\mathcal{M},\tau),\norm{\cdot}_E)$ for $E(\mathcal{M},\tau)$ and $E(\mathcal{N},\nu)$ generated by $E(0,\infty)$.  
\end{enumerate}
\end{corollary}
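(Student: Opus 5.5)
We will show that condition (7) is equivalent to condition (3) of Corollary~\ref{cor:surjective}; by that corollary this proves the claim. The key observation is that $\cL_{\log}$ is itself generated by a symmetric quasi-Banach-type function space. The noncommutative $L_p$-spaces with $0<p<\infty$ are also among the spaces allowed in (7).

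\emph{(3)$\Rightarrow$(7).} Assume $\Phi$ is surjective and $\mu(\Phi(x))=\mu(x)$ for all $x\in\cL_{\log}(\cM,\tau)$. Then for every symmetric quasi-Banach function space $E(0,\infty)$ and every $x\in E(\cM,\tau)\cap\cL_{\log}(\cM,\tau)$ we have $\mu(\Phi(x))=\mu(x)\in E(0,\infty)$. Hence $\Phi(x)\in E(\cN,\nu)\cap\cL_{\log}(\cN,\nu)$ and
\[
\|\Phi(x)\|_{E}=\|\mu(\Phi(x))\|_{E(0,\infty)}=\|\mu(x)\|_{E(0,\infty)}=\|x\|_E .
\]
For surjectivity onto $E(\cN,\nu)\cap\cL_{\log}(\cN,\nu)$, take $y$ in that space. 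By surjectivity of $\Phi$ there is $x\in\cL_{\log}(\cM,\tau)$ with $\Phi(x)=y$, and then $\mu(x)=\mu(y)\in E(0,\infty)$, so $x\in E(\cM,\tau)$. This direction uses only the definition $\|x\|_{E(\cM,\tau)}=\|\mu(x)\|_{E(0,\infty)}$.

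\emph{(7)$\Rightarrow$(6).} Specialise $E(0,\infty)$ to $L_p(0,\infty)$ and $L_q(0,\infty)$ for two distinct exponents $0<p\neq q<\infty$. Both are symmetric quasi-Banach function spaces, and $E(\cM,\tau)$ coincides with $L_p(\cM,\tau)$ (respectively $L_q(\cM,\tau)$) with the same quasi-norm. Condition (7) then gives exactly the surjective isometries required in (6). Corollary~\ref{cor:surjective} yields (6)$\Rightarrow$(3), and (3) is equivalent to all the other statements there.

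I expect no real obstacle; the only delicate point is bookkeeping. I would note that the target space in (7) should read $\cL_{\log}(\cN,\nu)$, as the typo suggests. I would also check that the identification $E(\cM,\tau)=L_p(\cM,\tau)$ holds isometrically for $E=L_p(0,\infty)$; this is immediate from $\tau(|x|^p)=\int_0^\infty\mu(t;x)^p\,dt$. If one prefers to avoid invoking (6), one could argue directly instead. Since $\cL_{\log}$ is not a quasi-Banach space, one would use (7) with $E=L_1$ together with Lemma~\ref{singular-function}-type arguments, but routing through (6) is simpler.
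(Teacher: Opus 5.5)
Your proposal is correct and matches the paper's argument. The direction (3)$\Rightarrow$(7) is argued identically: you use $\|\cdot\|_{E(\cM,\tau)}=\|\mu(\cdot)\|_{E(0,\infty)}$ and pull back preimages of $E$-elements. For the converse the paper specialises $E$ to $L_p(0,\infty)$ for all $0<p\le\infty$ to land in (4), whereas you specialise to two exponents to land in (6); both routes rest on the same implication (6)$\Rightarrow$(3) of Corollary~\ref{cor:surjective}. Your opening remark that $\cL_{\log}$ is generated by a quasi-Banach-type space is never used and, as you note later, $\cL_{\log}$ is not quasi-Banach, so it should simply be dropped.
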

\begin{proof}
    Since $L_p(0,\infty)$, $0<p\leq\infty$, is a symmetric quasi-Banach function space, it follows that the above statement implies (4) of Corollary \ref{cor:surjective}.

    It suffices to prove that condition (3) in Corollary \ref{cor:surjective} implies condition (7). Since $\Phi$ preserves singular value functions, it follows that the restriction of $\Phi$ on $E(\mathcal{M},\tau)$ is an isometry from $(E(\mathcal{M},\tau) \cap \cL_{\log }(\cM,\tau),\left\|{\cdot}\right\|_E)$ into $(E(\mathcal{N},\nu)\cap \cL_{\log }(\cN,\nu),\left\|{\cdot}\right\|_E)$ for any $E(\mathcal{M},\tau)$ generated by an arbitrary symmetric quasi-Banach function space $E(0,\infty)$.

It remains to    prove that $\Phi$ is surjective. Since $\Phi:\mathcal{L}_{\log}(\mathcal{M},\tau)\to\mathcal{L}_{\log}(\mathcal{N},\nu)$ is surjective and preserves the singular value functions, it follows that for any $y\in E(\mathcal{N},\nu)\cap\mathcal{L}_{\log}(\mathcal{N},\nu)$, the element $x\in   \mathcal{L}_{\log}(\mathcal{M},\tau)$ such that  $\Phi(x)=y$ satisfies $\mu(x)=\mu(y)\in E(0,\infty)$. 
Hence, $\Phi(E(\mathcal{M},\tau)\cap \cL_{\log }(\cM,\tau))=E(\mathcal{N},\nu)\cap \cL_{\log }(\cN,\nu)$, that is, $\Phi|_{E(\mathcal{M},\tau)}:E(\mathcal{M},\tau)\cap \cL_{\log }(\cM,\tau)\to E(\mathcal{N},\nu)\cap \cL_{\log }(\cN,\nu)$ is surjective, which completes the proof.
\end{proof}

\section{Comparisons of Haagerup-Schultz algebras and noncommutative $L_1$-spaces} \label{S-coincide}

In this section, we obtain a complete characterization of semifinite von Neumann algebras $(\cM,\tau)$ for which
the Haagerup--Schultz  algebra $\cL_{\log}(\cM,\tau)$ coincides with
the  space $L_1(\cM,\tau)$.
We show that equality holds if and only if $\cM$ is atomic and
there exists a positive constant $\delta>0$ such that every nonzero projection $p\in\cM$
satisfies $\tau(p)\ge \delta$.
In all other cases, including diffuse algebras and  algebras with atoms of arbitrarily small trace,
the inclusion $L_1(\cM,\tau)\subsetneq \cL_{\log}(\cM,\tau)$ is strict.

It is immediate that $\log(1+t)\le t$ for all $t\ge0$, whence
\begin{align}\label{l1llog}
L_1(\cM,\tau)\subseteq \cL_{\log}(\cM,\tau)
\quad\text{and}\quad
\left\|{x}\right\|_{\log}\le \left\|x\right\|_1,\qquad x\in L_1(\cM,\tau).
\end{align}
The main aim of this section  is to determine when the equality holds for the   inclusion \eqref{l1llog}.

\begin{theorem}\label{thm:main1}
Let $\cM$ be a semifinite von Neumann algebra with faithful normal semifinite trace $\tau$.
Then, 
\[
\cL_{\log}(\cM,\tau)=L_1(\cM,\tau)
\]
if and only if $\cM$ is atomic and there exists a constant $\delta>0$ such that
\begin{align}\label{eq:delta}
\tau (p)\ge \delta \qquad \text{for every nonzero projection } p\in\cM.
\end{align}
Moreover, in this case, the topologies induced by $\left\|\cdot\right\|_{\log}$ and $\left\|\cdot\right\|_1$ coincide.
\end{theorem}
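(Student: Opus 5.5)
We prove the two directions separately and get the topological statement along the way.

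\emph{Sufficiency.} Assume $\cM$ is atomic and \eqref{eq:delta} holds. The first step is to bound the singular value function below on its support. For $x\in S(\cM,\tau)$ and $s>0$, the spectral projection $e_{(s,\infty)}(|x|)$ is either $0$ or has trace $\ge\delta$. Hence $d(s;|x|)\in\{0\}\cup[\delta,\infty]$ for every $s$, and so $\mu(t;x)\ge \mu(\delta^-;x)$-type bounds hold. Concretely, I would show $\mu(t;x)\le \mu(0;x)$ together with
\[
\left\|x\right\|_{\log}\ge \delta\log\bigl(1+\mu(\delta/2;x)\bigr).
\]
Combined with $\mu(t;x)\le\mu(\delta/2;x)$ for $t\ge\delta/2$, this shows that each $x\in\cL_{\log}(\cM,\tau)$ with $\left\|x\right\|_{\log}\le 1$ satisfies $\sup_{t\ge\delta/2}\mu(t;x)\le C$, with $C=e^{1/\delta}-1$.

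For the part $t<\delta/2$, atomicity with \eqref{eq:delta} should also give $\mu(t;x)=\mu(\delta/2;x)$ for $t<\delta$ minus..., but this is subtle. The cleaner route is the jump structure: since $d(\cdot;|x|)$ takes no values in $(0,\delta)$, we get $\mu(t;x)=\mu(0;x)=\|x\|_\infty$ for all $t\in[0,\delta)$. Indeed, if $\mu(t;x)<\|x\|_\infty$ for some $t<\delta$, then some $s$ has $0<d(s;|x|)\le t<\delta$, a contradiction. Therefore $\left\|x\right\|_{\log}\ge\delta\log(1+\|x\|_\infty)$, so every element of $\cL_{\log}$ is bounded, with $\|x\|_\infty\le e^{\|x\|_{\log}/\delta}-1$.

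Next, on $\{\|x\|_\infty\le R\}$ we use concavity of $\log(1+u)\ge \frac{\log(1+R)}{R}u$ for $0\le u\le R$. This gives
\[
\left\|x\right\|_{\log}\ge c_R\left\|x\right\|_1 .
\]
Hence $\cL_{\log}\subseteq L_1$, and together with \eqref{l1llog} the spaces are equal. The same inequality shows that $\|x_n\|_{\log}\to0$ forces $\|x_n\|_\infty\to0$ and then $\|x_n\|_1\to0$. The converse direction follows from \eqref{l1llog}, so the two topologies coincide.

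\emph{Necessity.} We argue by contrapositive and construct $x\in\cL_{\log}\setminus L_1$ in two cases.

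\emph{Case 1: $\inf\{\tau(p):0\ne p\}=0$.} This covers non-atomic $\cM$, where a diffuse summand has projections of arbitrarily small trace. We pick nonzero projections $p_n$ with $\tau(p_n)\le 2^{-n}$. The plan is to make them mutually orthogonal. In the diffuse case we subdivide inside a finite projection. In the atomic case we take minimal projections of small trace: they lie in distinct or the same factor summands, and within a type I factor summand $B(H)$ all minimal projections have equal trace, so small traces force infinitely many summands, which are automatically orthogonal. We then set $x=\sum_n a_n p_n$ with $a_n\tau(p_n)=1$. This gives $\tau(x)=\infty$, while
\[
\left\|x\right\|_{\log}=\sum_n\tau(p_n)\log\bigl(1+\tau(p_n)^{-1}\bigr)<\infty,
\]
which converges once $\tau(p_n)$ decays geometrically, since $\varepsilon\log(1+1/\varepsilon)$ is summable along $2^{-n}$.

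\emph{Case 2.} If condition \eqref{eq:delta} holds, then $\cM$ is automatically atomic. A diffuse part would yield arbitrarily small projections: it is semifinite, so it has a finite projection, and halving it repeatedly produces traces tending to $0$.

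The main obstacle I expect is the orthogonalization in the atomic subcase of Case 1. We must check carefully that minimal projections with traces tending to $0$ can be chosen pairwise orthogonal via the central decomposition into type I factors.
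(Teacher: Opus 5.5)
Your proposal is correct. For the equality of the spaces it follows essentially the paper's route. The genuine difference is in how you get the equivalence of topologies.

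\textbf{Necessity.} This matches Lemma~\ref{lem:diffusefail} and Lemma~\ref{cor:necessity}. You reduce to $\inf\{\tau(p):p\neq 0\}=0$ and produce mutually orthogonal $p_n$ with geometrically small trace, either by halving inside a finite projection of a diffuse part or by taking minimal projections in distinct type I factor summands. Then you take $x=\sum_n \tau(p_n)^{-1}p_n$. Two small details should be stated explicitly. First, $x$ is $\tau$-measurable, because its spectral tails $\sum_{n\ge N}p_n$ have finite trace. Second, with only $\tau(p_n)\le 2^{-n}$, bounding each term by $2^{-n}\log(1+2^n)$ uses that $\varepsilon\mapsto\varepsilon\log(1+\varepsilon^{-1})$ is increasing. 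That follows from $\log(1+u)\ge u/(1+u)$. The paper avoids this by using $\varepsilon_n\le 4^{-n}$ and $\log(1+t)\le\sqrt t$.

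\textbf{Sufficiency.} The underlying idea is the same as the paper's: the spectral distribution function has no values in $(0,\delta)$. The paper uses this to show $S(\cM,\tau)=\cM$ via $\tau(e_{(\lambda,\infty)}(|x|))\to 0$. It then uses $\log(1+t)\ge t/(1+\left\|x\right\|_\cM)$ (Lemma~\ref{lm1}). You instead phrase the gap as $\mu(t;x)=\left\|x\right\|_\infty$ for $t\in[0,\delta)$, and use the chord bound $\log(1+u)\ge \frac{\log(1+R)}{R}u$ on $[0,R]$. Your version gives the quantitative estimates
\[
\left\|x\right\|_\infty\le e^{\left\|x\right\|_{\log}/\delta}-1,\qquad \left\|x\right\|_1\le \frac{R}{\log(1+R)}\left\|x\right\|_{\log}\ \text{ when } \left\|x\right\|_\infty\le R.
\]
Together with \eqref{l1llog}, these show directly that $\left\|x_n\right\|_{\log}\to 0$ if and only if $\left\|x_n\right\|_1\to 0$. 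That proves the coincidence of topologies.

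The paper gets that last statement from the Open Mapping Theorem for $F$-spaces. That argument is shorter, but it relies on completeness of both spaces and gives no constants. Your route is elementary and explicit.

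The first paragraph of your sufficiency part (the bounds involving $\mu(\delta/2;x)$ and the vague remark about $t<\delta/2$) should be deleted. As first written, $\left\|x\right\|_{\log}\ge\delta\log(1+\mu(\delta/2;x))$ is only justified with constant $\delta/2$. The constant $\delta$ is justified only after the jump-structure observation, which you then use anyway.
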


\begin{lemma}\label{lm1} Let $\mathcal{M}$ be a von Neumann algebra with a  faithful normal semifinite trace $\tau$. 
Then,
\begin{align*}
\mathcal{L}_{\log}(\mathcal{M},\tau)\cap\mathcal{M} = L_1(\mathcal{M},\tau)\cap\mathcal{M}.
\end{align*}
\end{lemma}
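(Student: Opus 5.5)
The inclusion $L_1(\cM,\tau)\cap\cM\subseteq\cL_{\log}(\cM,\tau)\cap\cM$ is immediate from \eqref{l1llog}. It therefore suffices to prove the reverse inclusion. The plan is to compare $\log(1+t)$ with $t$ on the bounded interval where the singular values of a bounded operator live.

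Fix $x\in\cL_{\log}(\cM,\tau)\cap\cM$ with $x\neq 0$, and put $M=\left\|x\right\|_{\cM}$. Since $\mu(t;x)\le \left\|x\right\|_{\cM}=M$ for all $t>0$ (for instance from \eqref{mu(x)} with $p=0$), every value $\mu(t;x)$ lies in $[0,M]$.

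The function $s\mapsto \log(1+s)/s$ is decreasing on $(0,\infty)$. Hence for $0\le s\le M$ we have
\begin{align*}
\log(1+s)\ge c_M\, s,\qquad c_M:=\frac{\log(1+M)}{M}>0,
\end{align*}
where the case $s=0$ is trivial. Equivalently, this is the concavity of $\log(1+\cdot)$ on $[0,M]$: its graph lies above the chord from $0$ to $M$.

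Applying this pointwise with $s=\mu(t;x)$ and integrating, \eqref{log-norm} gives
\begin{align*}
\left\|x\right\|_1=\int\limits_0^\infty \mu(t;x)\,dt\le \frac{1}{c_M}\int\limits_0^\infty \log\bigl(1+\mu(t;x)\bigr)\,dt=\frac{M}{\log(1+M)}\left\|x\right\|_{\log}<\infty .
\end{align*}
Thus $x\in L_1(\cM,\tau)$, which proves the lemma.

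There is no real obstacle here. The only point needing care is the uniform bound $\mu(\cdot;x)\le\left\|x\right\|_{\cM}$, which is what makes the constant $c_M$ independent of $t$. As a byproduct, the argument gives a quantitative equivalence of $\left\|\cdot\right\|_{\log}$ and $\left\|\cdot\right\|_1$ on norm-bounded subsets of $\cM$, which should be useful later in the proof of Theorem~\ref{thm:main1}.
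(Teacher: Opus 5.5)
Your proof is correct and uses the same idea as the paper: a linear lower bound $\log(1+t)\ge c\,t$ on $[0,\left\|x\right\|_{\cM}]$. The paper gets it from $\log(1+t)\ge \frac{t}{1+t}\ge \frac{t}{1+\left\|x\right\|_{\cM}}$ and applies it to $|x|$ via functional calculus and positivity of $\tau$, whereas you use the chord bound from concavity and integrate $\mu(\cdot;x)$ directly; the only real difference is that your constant $\frac{M}{\log(1+M)}$ is sharper than the paper's $1+M$.
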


\begin{proof} Taking into account \eqref{l1llog}, it suffices to show that
\begin{align*}
 \mathcal{L}_{\log}(\mathcal{M},\tau)\cap\mathcal{M} \subseteq L_1(\mathcal{M},\tau)\cap\mathcal{M}.
\end{align*}

Let $x\in \mathcal{M}$ be such that  $\tau(\log(1+|x|))<\infty$. For $t\ge0$, note that
\[
\log(1+t) \ge \frac{t}{1+t}.
\]
Indeed, define a function $f$ on $[0,\infty)$ by setting  $f(t) = \log(1+t) - \frac{t}{1+t}$. 
Then, $f(0)=0$ and $f'(t) = \frac{t}{(1+t)^2}\ge0$. In particular,  $f(t)\ge0$.

By functional calculus,
\begin{align*}
\log(1+|x|) \ge |x|(\mathbf{1}+|x|)^{-1}.
\end{align*}
For all $t\in[0,\left\|x\right\|_\cM]$, we have
\begin{align*}
\frac{t}{1+t} \ge \frac{t}{1+\left\|x\right\|_\cM}.
\end{align*}
Hence, 
\begin{align*}
|x|(\mathbf{1}+|x|)^{-1} \ge \frac{1}{1+\left\|x\right\|_\cM}\,|x|.
\end{align*}
By the positivity and normality of 
  $\tau$, we have 
\[
\tau(\log(1+|x|)) \ge \tau\!\left(|x|(\mathbf{1}+|x|)^{-1}\right)
\ge \frac{1}{1+\left\|x\right\|_\cM}\,\tau(|x|).
\]
Since $\tau(\log(1+|x|))<\infty$, it follows that $\tau(|x|)<\infty$. Therefore $x\in L_1(\mathcal{M},\tau)\cap \cM$.
The proof is complete.
\end{proof}

\begin{lemma}\label{lem:diffusefail}
If $\cM$ contains  a sequence
of mutually orthogonal projections $\{p_n\}_{n\ge 1}$ with $\tau(p_n)\downarrow 0$ (e.g. $\cM$ is atomless),
then $\cL_{\log}(\cM,\tau)\ne L_1(\cM,\tau)$.
\end{lemma}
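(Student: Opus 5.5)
We construct explicitly an element $x=\sum_n c_n p_n$ (series of orthogonally supported positive operators) with $\tau(\log(\mathbf{1}+x))<\infty$ but $\tau(x)=\infty$. Since the $p_n$ are mutually orthogonal, for any choice of scalars $c_n\ge 0$ with $c_n\to\infty$ slowly enough (or bounded), the operator $x=\sum_{n} c_n p_n$ is a well-defined positive self-adjoint operator affiliated with $\cM$ (strong resolvent sum of commuting spectral pieces), and by functional calculus
\[
\tau(x)=\sum_n c_n\tau(p_n),\qquad \tau(\log(\mathbf{1}+x))=\sum_n \log(1+c_n)\tau(p_n),
\]
using normality of $\tau$. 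We also need $x\in S(\cM,\tau)$: since $\sum_n\tau(p_n)$ might be infinite, we first pass to a subsequence with $\tau(p_n)\le 2^{-n}$ (possible as $\tau(p_n)\downarrow 0$, and strictly positive by faithfulness). Then $e_{(\lambda,\infty)}(x)\le \sum_n p_n$ has trace at most $1$, so $x$ is $\tau$-measurable.

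The choice of coefficients: put $t_n=\tau(p_n)\in(0,2^{-n}]$ and $c_n=\frac{1}{n^2 t_n}\cdot n = \frac{1}{n t_n}$? Then $\sum c_nt_n=\sum 1/n=\infty$, while $\log(1+c_n)t_n\le \log(1+\frac{1}{nt_n})t_n$. Since $t\log(1+1/(nt))$ need not be summable in general, it is cleaner to use $c_n=1/t_n$ on a further subsequence: then $\sum c_nt_n=\sum 1=\infty$, and $\sum t_n\log(1+1/t_n)$ converges provided $t_n$ decays fast, e.g. $t_n\le 2^{-n}$ gives $t_n\log(1+1/t_n)\le 2^{-n}\log(1+2^n)\lesssim n2^{-n}$ (using that $t\mapsto t\log(1+1/t)$ is increasing on $(0,\infty)$). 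So $x\in\cL_{\log}(\cM,\tau)\setminus L_1(\cM,\tau)$.

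For the parenthetical claim (atomless $\cM$): any nonzero $\tau$-finite projection $q$ (exists by semifiniteness) can be halved repeatedly in a diffuse algebra, producing orthogonal projections $q_n\le q$ with $\tau(q_n)=2^{-n}\tau(q)$; this verifies the hypothesis. The main obstacle is only bookkeeping: justifying that the infinite sum defines a $\tau$-measurable operator and that the traces of the functional calculus split as sums, which follows from normality of $\tau$ and monotone convergence of partial sums $\sum_{n\le N}\log(1+c_n)p_n\uparrow\log(\mathbf{1}+x)$.
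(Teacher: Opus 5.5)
Your proposal is correct and follows the paper's argument: pass to a subsequence with rapidly decaying traces and set $x=\sum_n \tau(p_n)^{-1}p_n$, so that $\tau(x)=\infty$ while $\sum_n\tau(p_n)\log(1+\tau(p_n)^{-1})<\infty$. The differences are cosmetic. You use $\tau(p_n)\le 2^{-n}$ and the monotonicity of $t\mapsto t\log(1+1/t)$, where the paper uses $\tau(p_n)\le 4^{-n}$ and $\log(1+t)\le\sqrt{t}$; you also justify the $\tau$-measurability of $x$ and the atomless example explicitly, which the paper leaves implicit.
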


\begin{proof}
Let $\{p_n\}$ be mutually orthogonal projections with $\tau(p_n)=\varepsilon_n\to0$.
If necessary, passing to subsequence we can assume that $\varepsilon_n\le 4^{-n}$ for all $n\ge1$.
We have\footnote{Since $\log\big(1+t\big) \le \sqrt{t}$ for all $t>0$, we have 
$\sum\limits_{n=1}^\infty \varepsilon_n \log(1+\varepsilon_n^{-1})
\le \sum\limits_{n=1}^\infty \varepsilon_n\frac{1}{\sqrt{\varepsilon_n}} 
\le\sum\limits_{n=1}^\infty \frac{1}{2^n}<\infty$.}
\begin{align*}
\sum_{n=1}^\infty \varepsilon_n \log(1+\varepsilon_n^{-1})<\infty.
\end{align*}
In particular, the operator
\begin{align*}
x=\sum_{n=1}^\infty \varepsilon_n^{-1}p_n
\end{align*}
is $\tau$-measurable with
\begin{align*}
\tau(\log(1+\abs{x}))=\sum\limits_{n=1}^\infty \varepsilon_n\log(1+\varepsilon_n^{-1})<\infty,
\quad\text{but}\quad
\tau(\abs{x})=\sum\limits_{n=1}^\infty \varepsilon_n \varepsilon_n^{-1}=\infty.
\end{align*}
Hence,  $x\in\cL_{\log}(\cM,\tau)\setminus L_1(\cM,\tau)$,
which proves $\cL_{\log}(\cM,\tau)\ne L_1(\cM,\tau)$.
\end{proof}

\begin{lemma}\label{cor:necessity}
If $\cL_{\log}(\cM,\tau)=L_1(\cM,\tau)$, then $\cM$ must be atomic
and there exists $\delta>0$ such that \eqref{eq:delta} holds.
\end{lemma}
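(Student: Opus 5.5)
We argue by contraposition and rely on Lemma~\ref{lem:diffusefail}. Assume $\cL_{\log}(\cM,\tau)=L_1(\cM,\tau)$. It suffices to prove one thing: there is no sequence of nonzero projections $q_n\in\cM$ with $\tau(q_n)\to 0$. Both conclusions then follow.

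\emph{Atomicity.} Suppose $\cM$ is not atomic. Then there is a nonzero projection $e$ such that $e\cM e$ is diffuse. By semifiniteness we may take $\tau(e)<\infty$. A diffuse algebra with a finite trace admits projections of every trace in $[0,\tau(e)]$. So we can choose mutually orthogonal projections $p_n\le e$ with $\tau(p_n)=2^{-n}\tau(e)$, and Lemma~\ref{lem:diffusefail} gives a contradiction.

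\emph{The lower bound on traces.} Now $\cM$ is atomic. Suppose $\inf\{\tau(p): p\neq 0\}=0$. Every nonzero projection dominates a minimal projection of no larger trace, so there are minimal projections $q_n$ with $\tau(q_n)\to 0$. We need mutually orthogonal ones in order to apply Lemma~\ref{lem:diffusefail}.

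\emph{The main step: making the projections orthogonal.} Write $\cM=\bigoplus_j B(H_j)$. The trace on the $j$-th summand is $c_j\,\mathrm{Tr}$, and the minimal projections there all have trace $c_j$. Since $\tau(q_n)\to0$, we have $\inf_j c_j=0$, and we consider two cases.
\begin{itemize}
\item If infinitely many distinct summands have $c_j\to0$, pick one minimal projection in each of these summands along a subsequence. These are automatically mutually orthogonal.
\item Otherwise some single value $c_j$ is attained with $c_j$ arbitrarily small, which is impossible because each summand has only one value $c_j>0$.
\end{itemize}
So the first case always occurs, and we obtain mutually orthogonal $p_n$ with $\tau(p_n)\downarrow0$ after passing to a subsequence. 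Lemma~\ref{lem:diffusefail} again gives a contradiction, and this yields~\eqref{eq:delta}.
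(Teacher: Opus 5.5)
Your proof is correct and takes the same route as the paper. It applies Lemma~\ref{lem:diffusefail} once to force atomicity, and once more, via minimal projections lying in distinct summands $B(H_j)$, to force the uniform lower bound \eqref{eq:delta}. You also supply details the paper leaves implicit: the diffuse finite-trace corner giving orthogonal $p_n$ with $\tau(p_n)=2^{-n}\tau(e)$, and the orthogonality of atoms from different summands; your two-case split in the last step is simply the observation that $\inf_j c_j=0$ is not attained, so one can choose indices with $c_{j_{n+1}}<c_{j_n}/2$, which are automatically distinct.
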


\begin{proof} Assume $\cL_{\log}(\cM,\tau)=L_1(\cM,\tau)$. Applying Lemma~\ref{lem:diffusefail}, we obtain that $\cM$ is atomic, that is,
\[
\cM\simeq \bigoplus_{i\in I} B(H_i)
\]
with $\tau=\sum\limits_i \tau_i$,
where $\tau_i$ is a faithful normal semifinite trace on $B(H_i)$, $i\in I$. 
If there exists a sequence of atoms $p_i\in B(H_i)$ with $\tau(p_i)\downarrow0$,
then again by
Lemma~\ref{lem:diffusefail}, we have $\cL_{\log}(\cM, \tau)\neq L_1(\cM, \tau)$.
The proof is complete.
\end{proof}

\begin{lemma}\label{lem:suff}
Assume that $\cM$ is atomic and that there exists $\delta>0$ such that
$\tau(p)\ge\delta$ for every nonzero projection $p\in\cM$.
Then $\cL_{\log}(\cM,\tau) \subseteq L_1(\cM,\tau)$.
\end{lemma}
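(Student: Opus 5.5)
Given $x\in\cL_{\log}(\cM,\tau)$, the plan is to show that the large part of $|x|$ is supported on a projection of finite trace. Everything then reduces to Lemma~\ref{lm1} and the bounded case.

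First, set $e=e_{(1,\infty)}(|x|)$. On the range of $e$ we have $\log(1+|x|)\ge \log 2\cdot e$. Positivity of $\tau$ then gives
\[
\tau(e)\le \frac{1}{\log 2}\,\tau\bigl(\log(\mathbf 1+|x|)\bigr)<\infty .
\]
Next we use the hypothesis. Since $\cM$ is atomic, $e$ is a sum of mutually orthogonal atoms (minimal projections) $\{q_j\}$. Each nonzero $q_j$ satisfies $\tau(q_j)\ge\delta$, so by normality of $\tau$ there are at most $\tau(e)/\delta$ of them. Hence $e$ is a finite sum of atoms. More usefully, the hypothesis forces every nonzero spectral projection of $|x|e$ to have trace at least $\delta$, while these projections all lie under $e$.

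The key step is to show that $|x|e$ is bounded. Suppose it were not. Then the spectral projections $e_{(n,\infty)}(|x|)$ would be nonzero for all $n$ and would decrease in $n$. Each has trace at least $\delta$, and all sit below $e$, which has finite trace. A decreasing sequence of projections of finite trace has trace converging to $\tau\bigl(\bigwedge_n e_{(n,\infty)}(|x|)\bigr)$ by normality. This infimum is $e_{\{\infty\}}(|x|)=0$, because $|x|$ is a densely defined self-adjoint operator. That contradicts $\tau\ge\delta$ along the sequence. So $|x|e$ is bounded, and by Lemma~\ref{mu-prop}(d) $\mu(x)$ is bounded.

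An equivalent way to phrase this is that $\mu(t;x)$ is constant on $[0,\delta)$: any spectral projection with trace below $\delta$ is zero. This step is where I expect the main care to be needed, namely making the normality argument for decreasing spectral projections precise. Note also that $e\cM e$ need not be finite-dimensional; $e$ is a finite sum of atoms only as a projection, but that is all the argument uses.

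Finally, split $x=xe+x(\mathbf 1-e)$. The term $x(\mathbf 1-e)$ lies in $\cM$, since $\||x|(\mathbf 1-e)\|\le 1$, and it is in $\cL_{\log}$ because $\mu(x(\mathbf 1-e))\le\mu(x)$. The term $xe$ is bounded by the previous step and is likewise in $\cL_{\log}$. Both summands are therefore in $\cL_{\log}(\cM,\tau)\cap\cM$, which by Lemma~\ref{lm1} equals $L_1(\cM,\tau)\cap\cM$. Hence $x\in L_1(\cM,\tau)$.
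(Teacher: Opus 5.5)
Your proof is correct and is essentially the paper's argument. The paper observes that $\tau(e_{(\lambda,\infty)}(|x|))\to 0$ for every $\tau$-measurable $x$, so the $\delta$-gap forces $e_{(\lambda,\infty)}(|x|)=0$ for large $\lambda$, giving $S(\cM,\tau)=\cM$, and then it applies Lemma~\ref{lm1}; you instead derive that decay directly for $x\in\cL_{\log}(\cM,\tau)$ from $\tau(e_{(1,\infty)}(|x|))<\infty$ and normality of $\tau$.

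Two harmless blemishes: the splitting $x=xe+x(\mathbf 1-e)$ is unnecessary once $x$ is known to be bounded, and your aside that $e\cM e$ need not be finite-dimensional is false (a sum of $k$ minimal projections gives $\dim e\cM e\le k^2$), though nothing in your argument depends on it.
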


\begin{proof} Let \(x \in S(\mathcal{M}, \tau)\). By \cite[Proposition 2.4.2]{BCLSZ}, we have 
\[
\tau\!\left(e_{(\lambda, \infty)}(|x|)\right) \longrightarrow 0 \quad \text{as } \lambda \to \infty .
\]
Hence, there exists \(\lambda > 0\) such that
\[
\tau\!\left(e_{(\lambda, \infty)}(|x|)\right) \le \frac{\delta}{2}.
\]
By assumption, every non-zero projection \(p \in P(\mathcal{M})\) satisfies \(\tau(p) \ge \delta\). Therefore,
\(e_{(\lambda, \infty)}(|x|)=0\). Consequently,
\[
|x| \le \lambda \mathbf{1},
\]
which implies that \(x \in \mathcal{M}\). That is, 
\(S(\mathcal{M}, \tau) = \mathcal{M}\).
It follows that
\[
\mathcal{L}_{\log}(\mathcal{M}, \tau) \subseteq S(\mathcal{M}, \tau) = \mathcal{M}.
\]
Thus, by Lemma~\ref{lm1},  we have \begin{align*}
\mathcal{L}_{\log}(\mathcal{M},\tau)=\mathcal{L}_{\log}(\mathcal{M},\tau)\cap\mathcal{M} = L_1(\mathcal{M},\tau)\cap\mathcal{M}\subseteq L_1(\mathcal{M},\tau).
\end{align*}
The proof is complete.
\end{proof}

\begin{proof}[Proof of Theorem~\ref{thm:main1}]
Combining \eqref{l1llog}, Lemmas~\ref{cor:necessity} and~\ref{lem:suff} yields the equivalence:
\begin{align*}
\cL_{\log}(\cM,\tau)=L_1(\cM,\tau)
\quad\Longleftrightarrow\quad
\text{$\cM$ is atomic with \eqref{eq:delta} holding.}
\end{align*}

The inequality \eqref{l1llog} implies that the identity mapping on $L_1(\cM, \tau)$ is $\left\|\cdot\right\|_1$-$\left\|\cdot\right\|_{\log}$-continuous.
By the  Open Mapping Theorem for $F$-spaces (see e.g. \cite{KPR}),  it is also is $\left\|\cdot\right\|_{\log}$-$\left\|\cdot\right\|_1$-continuous,
which implies the topologies induced by $\left\|\cdot\right\|_{\log}$ and $\left\|\cdot\right\|_1$ coincide.
The proof is complete.
\end{proof}

\subsection{Isometries of Haagerup--Schultz algebras associated with a type I-factors}
Assume that $\cM = B(H)$ for a Hilbert space $H$, equipped with the standard trace ${\rm tr}$.
The space $\cL_{\log}(B(H), {\rm tr})$ forms a subalgebra of $K(H)$, the ideal of compact operators in $B(H)$.

For any operator $x \in \cL_{\log}(B(H), {\rm tr})$, we have (see \cite[Example~4.1.6]{BCLSZ})
\begin{align*}
\mu(x) & = \sum\limits_{n \ge 1} \mu_n \, \chi_{[n-1,\,n)} ,
\end{align*}
where $\mu_1 \ge \mu_2 \ge \cdots \ge 0$ is the sequence of nonzero singular values of $x$, each repeated according to its multiplicity.
By the definition of the logarithmic norm~\eqref{log-norm}, we  have 
\begin{align*}
\left\|x\right\|_{\log} & = \sum\limits_{n \ge 1} \log(1 + \mu_n).
\end{align*}

The Haagerup--Schultz algebra associated with $B(H)$ can be defined as follows
\begin{align*}
\mathcal{L}_{\log}(H) = \Bigl\{ x\in K(H) : \left\|x\right\|_{\log} = \sum_{n\ge1} \log(1+\mu_n) < \infty \Bigr\}.
\end{align*}

The following result  follows immediately from Theorem~\ref{thm:main1}.
\begin{prop}\label{coincide} Let $H$ be a Hilbert space. Then
\begin{enumerate}
\item the Haagerup--Schultz algebra associated with $B(H)$ coincides with the ideal of trace-class operators, i.e., $\mathcal{L}_{\log}(H)=L_1(H)$;

\item the topologies generated by $\left\|\cdot\right\|_{\log}$ and $\left\|\cdot \right\|_1$ coincide.
\end{enumerate}
\end{prop}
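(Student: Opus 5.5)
The plan is to reduce Proposition~\ref{coincide} directly to Theorem~\ref{thm:main1}, applied with $\cM=B(H)$ and $\tau={\rm tr}$. Two hypotheses of that theorem have to be checked: that $B(H)$ is atomic, and that condition \eqref{eq:delta} holds with an explicit constant $\delta>0$.

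\textbf{Atomicity.} $B(H)$ is a type I factor, so it is atomic. Every nonzero projection $p\in B(H)$ dominates a rank-one projection $\xi\otimes\xi$ with $\xi$ a unit vector in $pH$, and rank-one projections are minimal. Hence every nonzero projection majorizes an atom.

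\textbf{Condition \eqref{eq:delta} with $\delta=1$.} For any nonzero projection $p$ we have ${\rm tr}(p)=\dim(pH)\ge 1$. This holds whether or not the rank of $p$ is finite, since ${\rm tr}(p)=\infty$ in the infinite-rank case.

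\textbf{Conclusion of part (1).} With both hypotheses verified, Theorem~\ref{thm:main1} gives $\cL_{\log}(B(H),{\rm tr})=L_1(B(H),{\rm tr})$. The right-hand side is the ideal of trace-class operators, so $\mathcal{L}_{\log}(H)=L_1(H)$.

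As a sanity check, the same conclusion can be seen directly from the sequence form of the $\log$-norm. If $\sum_n\log(1+\mu_n)<\infty$, then $\mu_n\to 0$, so $\mu_n\le 1$ for all but finitely many $n$. For those $n$, $\log(1+\mu_n)\ge \mu_n\log 2$, which forces $\sum_n\mu_n<\infty$. This is the discrete form of Lemma~\ref{lm1}.

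\textbf{Part (2).} This is the "moreover" clause of Theorem~\ref{thm:main1}, which rests on the Open Mapping Theorem for $F$-spaces. It applies because $(\cL_{\log},\left\|\cdot\right\|_{\log})$ is complete by \cite{DSZ16}, $L_1$ is complete, and the identity map is continuous by \eqref{l1llog}.

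There is no real obstacle here. The only points needing care are confirming the atomicity of $B(H)$ and the uniform lower bound ${\rm tr}(p)\ge 1$; everything else is inherited from Theorem~\ref{thm:main1}.
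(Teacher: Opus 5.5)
Your proposal is correct and follows the paper's route: the paper also obtains this proposition immediately from Theorem~\ref{thm:main1} applied to $(B(H),{\rm tr})$, which is atomic with ${\rm tr}(p)\ge 1$ for every nonzero projection, and part (2) is the ``moreover'' clause of that theorem. Your extra direct check of part (1), using $\log(1+\mu_n)\ge \mu_n\log 2$ once $\mu_n\le 1$, is also valid.
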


The following result is an immediate consequence of Theorem~\ref{log-isometry} and the general characterization of (anti-)$^*$-isomorphisms on $B(H)$~\cite[Theorem~A.8]{Molnar}.

\begin{corollary}\label{cor:log-iso}
Let $T$ be a surjective linear isometry from $\mathcal{L}_{\log}(H)$ onto itself with respect to the norm $\left\|\cdot \right\|_{\log}$.
Then there exist unitary operators $u, v \in B(H)$ such that
\begin{enumerate}
  \item either
  \begin{align*}
   T(x) = u x v
  \end{align*}
  for all $x \in\mathcal{L}_{\log}(H)$;
  \item or
  \begin{align*}
    T(x) = u x^{t} v
  \end{align*}
  for all $x \in\mathcal{L}_{\log}(H)$,
  where $x^{t}$ denotes the transpose of $x$ with respect to a fixed complete orthonormal basis of $H$.
\end{enumerate}
\end{corollary}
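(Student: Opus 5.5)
We apply the surjective case of Theorem~\ref{log-isometry} with $\cM=\cN=B(H)$ and $\tau=\nu={\rm tr}$. It gives
\begin{align*}
T(x)=wJ(x),\qquad x\in\cL_{\log}(H),
\end{align*}
where $w$ is a unitary in $B(H)$ and $J$ is a normal Jordan $^*$-automorphism of $B(H)$ satisfying ${\rm tr}(J(x))={\rm tr}(x)$ for $0\le x\in B(H)$. By Proposition~\ref{coincide}, $\cL_{\log}(H)=L_1(H)$, so the formula holds on all trace-class operators.

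Next we need the structure of $J$. Since $B(H)$ is a factor, a Jordan $^*$-isomorphism of $B(H)$ onto itself is either a $^*$-automorphism or a $^*$-anti-automorphism. This is the standard fact that a Jordan $^*$-homomorphism decomposes, via a central projection, into a $^*$-homomorphism part and a $^*$-anti-homomorphism part (Kadison, Størmer), and here the centre is trivial. We then invoke \cite[Theorem~A.8]{Molnar}. Every $^*$-automorphism of $B(H)$ has the form $J(x)=vxv^*$ for a unitary $v$. Every $^*$-anti-automorphism is the composition of the transpose $x\mapsto x^t$, taken with respect to a fixed orthonormal basis, with a $^*$-automorphism. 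The transpose is indeed a $^*$-anti-automorphism of $B(H)$ that preserves ${\rm tr}$. Hence $J(x)=vx^tv^*$ for some unitary $v$.

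Combining the two steps:
\begin{itemize}
\item in the first case, $T(x)=wvxv^*=uxv'$ with $u=wv$ and $v'=v^*$;
\item in the second case, $T(x)=(wv)x^t v^*$.
\end{itemize}
Both $u$ and $v'$ are unitaries, so these are exactly the two forms in the statement. Trace preservation is automatic in both cases and is not needed further. The main point to check is that the $J$ from Theorem~\ref{log-isometry} is a Jordan $^*$-isomorphism of all of $B(H)$, not merely of a subalgebra. This holds by the ``Moreover'' clause of Theorem~\ref{log-isometry}, since $T$ is surjective. Everything else is a direct citation of Theorem~\ref{log-isometry} and \cite{Molnar}.
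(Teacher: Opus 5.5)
Your proposal is correct and matches the paper's argument. You apply the surjective case of Theorem~\ref{log-isometry} with $\cM=\cN=B(H)$ to get $T=wJ$, with $w$ unitary and $J$ a Jordan $^*$-automorphism of $B(H)$, and then use triviality of the centre together with \cite[Theorem~A.8]{Molnar} to write $J(x)=vxv^*$ or $J(x)=vx^tv^*$; the paper states this as an immediate consequence, and you simply spell out the automorphism/anti-automorphism dichotomy.
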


The topologies generated by  $\left\|{\cdot}\right\|_{\log}$ and $\left\|{\cdot}\right\|_{1}$ coincide (see Proposition \ref{coincide}) and  
the spaces of surjective
isometries on $\mathcal{L}_{\log}(H)$ and $L_1(H)$ coincide (see Corollary~\ref{cor:log-iso} and \cite{Arazy,Erdos}).
By 
  the main result of  \cite{BFJ} (see also \cite[Proposition 2]{Sourour} or \cite{BFGJ}), we obtain the following characterization of one-parameter group of surjective isometries on $\mathcal{L}_{\log}(H)$. 
\begin{theorem}
 The set  $\{T_t\}_{t\in \mathbb{R}}$ is  a strongly continuous group of surjective isometries on $\mathcal{L}_{\log}(H)$ if and only if there exist self-adjoint (not necessarily bounded) operators $a$ and $b$ in $B(H)$ such that 
 $$T_t (x) = e^{it a } x e^{itb}, ~x\in\mathcal{L}_{\log}(H).$$
 If $\alpha$ is the infinitesimal generator of $\{T_t\}_{t\in \mathbb{R}}$, then 
 $$\alpha(x) = i(ax+xb)$$
 for all $x$ in the domain of $\alpha$. 
 The domain of $\alpha$ is precisely the set of all operators in $\mathcal{L}_{\log}(H)$ which map the domain of $b$ into the domain of $a$ and for which the closure of $ax+xb$ belongs to $\mathcal{L}_{\log}(H)$.

 The group $\{T_t\}_{t\in \mathbb{R}}$  is uniformly continuous if and only if $a$ and $b$ are bounded. 
\end{theorem}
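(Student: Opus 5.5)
The plan is to move the whole question from $\mathcal{L}_{\log}(H)$ to the Banach space $L_1(H)$, where the one-parameter group result of \cite{BFJ} applies. By Proposition~\ref{coincide}, $\mathcal{L}_{\log}(H)=L_1(H)$ as sets, and the topologies given by $\left\|\cdot\right\|_{\log}$ and $\left\|\cdot\right\|_1$ coincide. So strong continuity of $\{T_t\}_{t\in\mathbb{R}}$ means the same thing in both spaces, and so does uniform continuity, read as continuity of $t\mapsto T_t$ in the operator topology on $L_1(H)$.

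\emph{Step 1: the isometry groups are the same.} I will show that a linear bijection $T$ of $L_1(H)$ is a $\left\|\cdot\right\|_{\log}$-isometry if and only if it is a $\left\|\cdot\right\|_1$-isometry.
\begin{itemize}
\item If $T$ is a $\left\|\cdot\right\|_{\log}$-isometry, Corollary~\ref{cor:log-iso} gives $T(x)=uxv$ or $T(x)=ux^tv$ with $u,v$ unitary. In both cases singular values are preserved, so $T$ is an $L_1$-isometry.
\item Conversely, by the Arazy--Russo description \cite{Arazy}, surjective $L_1(H)$-isometries have exactly these two forms. These preserve $\mu(x)$, so by \eqref{log-norm} they are $\left\|\cdot\right\|_{\log}$-isometries.
\end{itemize}
Hence $\{T_t\}$ is a strongly continuous group of surjective $\log$-isometries exactly when it is a strongly continuous group of surjective $L_1(H)$-isometries.

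\emph{Step 2: apply the Banach space result.} For $L_1(H)$ (or more generally $C_p$, $p\neq 2$), \cite{BFJ} (see also \cite[Proposition 2]{Sourour}) states the following. Every strongly continuous group of surjective isometries has the form $T_t(x)=e^{ita}xe^{itb}$ with $a,b$ self-adjoint, possibly unbounded. Its generator is $\alpha(x)=i(ax+xb)$, on the domain of those $x$ that map $\operatorname{dom}(b)$ into $\operatorname{dom}(a)$ and for which the closure of $ax+xb$ lies in $L_1(H)$. The group is uniformly continuous exactly when $a,b$ are bounded.

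One point needs a justification. A priori each $T_t$ could be of transpose type. Since $t\mapsto T_t$ is a continuous group, $T_t=T_{t/2}^2$, and the square of a transpose-type map $x\mapsto ux^tv$ is of the form $x\mapsto u'xv'$. So every $T_t$ is of the first type; \cite{BFJ} handles this internally as well. The converse direction is a direct check: $e^{ita}\cdot e^{itb}$ is a strongly continuous group of maps $x\mapsto uxv$, hence of $\log$-isometries by Step 1.

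\emph{Step 3: transfer back.} The generator is defined as the limit of $(T_t x-x)/t$. Since the two topologies agree, the generator and its domain computed in $\left\|\cdot\right\|_{\log}$ are the same as those computed in $\left\|\cdot\right\|_1$, and "belongs to $L_1(H)$" reads as "belongs to $\mathcal{L}_{\log}(H)$".

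The main obstacle is the uniform continuity statement, because $\mathcal{L}_{\log}(H)$ is only an $F$-space and operator norms are not available directly. I will interpret uniform continuity via the equivalent $L_1$ operator norm. This is legitimate because the identity map is a homeomorphism between the two topologies (Theorem~\ref{thm:main1}), so the families of bounded sets agree and therefore uniform convergence on bounded sets means the same thing in both. The claimed equivalence with boundedness of $a,b$ then follows from \cite{BFJ}.
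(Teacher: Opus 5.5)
Your proposal is correct and follows the paper's route: Proposition~\ref{coincide} identifies the $\left\|\cdot\right\|_{\log}$- and $\left\|\cdot\right\|_1$-topologies, Corollary~\ref{cor:log-iso} together with \cite{Arazy,Erdos} shows that the surjective isometries of $\mathcal{L}_{\log}(H)$ and $L_1(H)$ coincide, and \cite{BFJ} (or \cite[Proposition 2]{Sourour}) then gives the result. Your extra details, ruling out transpose-type $T_t$ via $T_t=T_{t/2}^2$ and reading uniform continuity through the equivalent $L_1$-operator norm, are sound, and the paper leaves them implicit.
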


\section*{Acknowledgements}
J. Huang, K. Kudaybergenov and B. Yan were supported the NNSF of China (No.12031004, 12301160 and 12471134) and by Basic Research Program of Jiangsu (BK20251783). 
F. Sukochev was  supported by the  ARC (DP230100434).

{\bf Conflict of interest:} On behalf of all authors, the corresponding author states
that there is no conflict of interest.

{\bf Data availability:} Not applicable.

\end{document}